\UseRawInputEncoding
\documentclass[10pt,reqno]{amsart}
\usepackage{indentfirst,latexsym,bm}
\usepackage{amsfonts}
\usepackage{amssymb}
\usepackage{amsmath}
\usepackage{amsbsy}
\usepackage{dsfont}
\usepackage{amsthm}
\usepackage{amscd}
\usepackage[all]{xy}
\usepackage{chemarrow}
\usepackage{color}
\usepackage{multirow}
\usepackage{hyperref}
\usepackage[titletoc]{appendix}
\usepackage{geometry}

\textwidth 16cm \topmargin -0cm \oddsidemargin 0cm \evensidemargin 0cm \textheight 22cm \headheight 0pt
\begin{document}
\title{On Hopf algebras over basic Hopf algebras of dimension 24}
\author{Rongchuan Xiong}
\address{School of Computer Science and Artificial Inteligent, Changzhou University, Changzhou 213164, China}
\email{rcxiong@foxmail.com}
\subjclass[2010]{16T05, 16S35, 18D10}
\thanks{
\textit{Keywords and phrases:} Nichols algebra; Hopf algebra;  Without the dual Chevalley property.\\
The paper is partially supported by the NSFC (Grants No. 11926353, 11771142).
}
\maketitle

\newtheorem{question}{Question}
\newtheorem{defi}{Definition}[section]
\newtheorem{conj}{Conjecture}
\newtheorem{thm}[defi]{Theorem}
\newtheorem{lem}[defi]{Lemma}
\newtheorem{pro}[defi]{Proposition}
\newtheorem{cor}[defi]{Corollary}
\newtheorem{rmk}[defi]{Remark}
\newtheorem{Example}{Example}[section]

\theoremstyle{plain}
\newcounter{maint}
\renewcommand{\themaint}{\Alph{maint}}
\newtheorem{mainthm}[maint]{Theorem}

\theoremstyle{plain}

\newcommand{\C}{\mathcal{C}}
\newcommand{\D}{\mathcal{D}}
\newcommand{\A}{\mathcal{A}}
\newcommand{\cK}{\mathcal{K}}
\newcommand{\cH}{\mathcal{H}}
\newcommand{\De}{\Delta}
\newcommand{\M}{\mathcal{M}}
\newcommand{\K}{\mathds{k}}
\newcommand{\E}{\mathcal{E}}
\newcommand{\Pp}{\mathcal{P}}
\newcommand{\Lam}{\lambda}
\newcommand{\As}{^{\ast}}
\newcommand{\Aa}{a^{\ast}}
\newcommand{\B}{b^{\ast}}
\newcommand{\cF}{\mathcal{F}}
\newcommand{\HH}{\mathcal{K}}
\newcommand{\CYD}{{}^{\C}_{\C}\mathcal{YD}}
\newcommand{\AYD}{{}^{\A}_{\A}\mathcal{YD}}
\newcommand{\HYD}{{}^{H}_{H}\mathcal{YD}}
\newcommand{\ydH}{{}^{H}_{H}\mathcal{YD}}
\newcommand{\KYD}{{}^{K}_{K}\mathcal{YD}}
\newcommand{\DM}{{}_{D}\mathcal{M}}
\newcommand{\BN}{\mathcal{B}}
\newcommand{\NA}{\mathcal{B}}

\newcommand{\Ga}{g^{\ast}}
\newcommand{\X}{x^{\ast}}
\newcommand{\I}{\mathds{I}}
\newcommand{\Z}{\mathds{Z}}
\newcommand{\N}{\mathds{N}}
\newcommand{\G}{\mathcal{G}}
\newcommand{\roots }{\boldsymbol{\Delta }}

\newcommand\ad{\operatorname{ad}}
\newcommand{\Alg}{\Hom_{\text{alg}}}
\newcommand\Aut{\operatorname{Aut}}
\newcommand{\AuH}{\Aut_{\text{Hopf}}}
\newcommand\coker{\operatorname{coker}}
\newcommand\car{\operatorname{char}}
\newcommand\Der{\operatorname{Der}}
\newcommand\diag{\operatorname{diag}}
\newcommand\End{\operatorname{End}}
\newcommand\id{\operatorname{id}}
\newcommand\gr{\operatorname{gr}}
\newcommand\GK{\operatorname{GKdim}}
\newcommand{\Hom}{\operatorname{Hom}}
\newcommand\ord{\operatorname{ord}}
\newcommand\rk{\operatorname{rk}}
\newcommand\soc{\operatorname{soc}}

\newcommand{\bp}{\mathbf{p}}
\newcommand{\bq}{\mathbf{q}}
\newcommand\Sb{\mathbb S}
\newcommand\cR{\mathcal{R}}
\newcommand{\grAYD}{{}^{\gr\A}_{\gr\A}\mathcal{YD}}
\newcommand{\Dchaintwo}[3]{\xymatrix@C-4pt{\overset{#1}{\underset{x }{\circ}}\ar
@{-}[r]^{#2}
& \overset{#3}{\underset{v_1 }{\circ}}}}

\newcommand{\Dchaintwoa}[3]{\xymatrix@C-4pt{\overset{#1}{\underset{\  }{\circ}}\ar
@{-}[r]^{#2}
& \overset{#3}{\underset{\ }{\circ}}}}

\newcommand{\Dchainthree}[8]{\xymatrix@C-2pt{
\overset{#1}{\underset{#2}{\circ}}\ar  @ {-}[r]^{#3}  & \overset{#4}{\underset{#5
}{\circ}}\ar  @{-}[r]^{#6}
& \overset{#7}{\underset{#8}{\circ}} }}

\newcommand{\Dtriangle}[6]{
\xymatrix@R-12pt{  &    \overset{#1}{\underset{x}{\circ}} \ar  @{-}[dl]_{#2}\ar  @{-}[dr]^{#3} & \\
\overset{#4}{\underset{e_1}{\circ}} \ar  @{-}[rr]^{#5}  &  &\overset{#6}{\underset{v_1}{\circ}} }}

\begin{abstract}
We determine finite-dimensional Hopf algebras over an algebraically closed field of characteristic zero, whose Hopf coradical is isomorphic to a non-pointed basic Hopf algebra of dimension $24$ and the infinitesimal braidings are indecomposable objects. In particular, we obtain families of  new finite-dimensional Hopf algebras without the dual Chevalley property.
\end{abstract}
\section{Introduction}
Let $\K$ be an algebraically closed field of characteristic zero. It is a fundamental and difficult question  in Hopf algebra theory to classify finite-dimensional ones. The research in this direction is very rich. Most of the classification results consist of Hopf algebras that are basic or have the dual Chevalley property (that is, its coradical is a subalgebra) . But there are very few results on finite-dimensional Hopf algebras without the dual Chevalley property in the literature, unless examples without pointed duals were constructed in \cite{GG16,HX17,HX18b,MBGG,X17} via the generalized lifting method \cite{AC13}.

As a generalization of the \emph{lifting method}\cite{AS98},  the generalized lifting method gives a technical framework to classify the Hopf algebras without the dual Chevalley property. It consists of the following steps (see \cite{AC13}):
\begin{itemize}
  \item {\text{\rm Step\,1}.} Classify all Hopf algebras $L$ that are generated by a cosemisimple coalgebra.
  \item {\text{\rm Step\,2}.} Classify all connected graded
  Hopf algebras $R$ in  the category ${}_L^L\mathcal{YD}$ of left Yetter-Drinfeld modules over $L$.
  \item {\text{\rm Step\,3}.} Given $L$ and $R$ as in previous items, classify all Hopf algebras $A$ such that $\text{gr}\,A\cong R\sharp L$. Here $A$ is called a lifting of $R$ over $L$.
\end{itemize}

It works because of the following facts. Suppose that $A$ is a Hopf algebra over $\K$ and denote by $A_{[0]}$ the Hopf coradical of $A$ (it is generated by the coradical $A_0$ of $A$).  If $S_A(A_{[0]})\subseteq A_{[0]}$, then the standard filtration $\{A_{[n]}\}_{n\geq 0}$, defined recursively by $A_{[n]}=A_{[n-1]}\bigwedge A_{[0]}$, is a Hopf algebra filtration. Therefore the associated graded coalgebra
$\gr A=\oplus_{n=0}^{\infty}A_{[n]}/A_{[n-1]}$ with $A_{[-1]}=0$ is a Hopf algebra and so there is a connected graded braided Hopf algebra $R=\oplus_{n\geq 0}R(n)$ in ${}^{A_{[0]}}_{A_{[0]}}\mathcal{YD}$ such that $\gr A\cong R\sharp A_{[0]}$. Here  $R$ and $R(1)$ are called the \emph{diagram} and \emph{infinitesimal braiding} of $A$, respectively.

In this paper, following the work of \cite{GG16}, we fix a  Hopf algebra $\cK_{24,1}$ of dimension $24$ that is basic and generated by the coradical,  and  continue the study on Steps 2 and 3 in the lifting procedure.

Using the equivalence ${}_{\cK_{24,1}}^{\cK_{24,1}}\mathcal{YD}\cong {}_{\D(\cK_{24,1}^{cop})}\mathcal{M}$, we  determine simple Yetter-Drinfeld modules over $\cK_{24,1}$. It turns out there exist $144$ simple objects in ${}_{\cK_{24,1}}^{\cK_{24,1}}\mathcal{YD}$, among which there are $24$ one-dimensional objects
$\K_{\chi_{i,j,k}}$ with $(i,j,k)\in\I_{0,1}\times\I_{0,1}\times\I_{0,5}$ and $120$ two-dimensional objects $V_{i,j,k,\iota}$ with
$(i,j,k,\iota)\in\Lambda=\{(i,j,k,\iota)\mid i,j\in \I_{0,5},  k,\iota\in \I_{0,1}, j+3k\not\equiv 3(\iota+1)\mod 6\}$, see Theorem \ref{thmsimplemoduleD-24} for details.

Now we determine finite-dimensional Nichols algebras over simple objects in ${}_{\cK_{24,1}}^{\cK_{24,1}}\mathcal{YD}$ and the liftings of their bosonizations. We first discard Nichols algebras $\BN(V_{i,j,k,\iota})$  of infinite dimension.  Using the equivalence ${}_{\cK_{24,1}}^{\cK_{24,1}}\mathcal{YD}\cong {}_{\gr\A_{24,1}}^{\gr\A_{24,1}}\mathcal{YD}$, we transport the information of $\BN(V_{i,j,k,\iota})$ from the category ${}_{\cK_{24,1}}^{\cK_{24,1}}\mathcal{YD}$ to ${}_{\gr\A_{24,1}}^{\gr\A_{24,1}}\mathcal{YD}$, where $\A_{24,1}$ is the dual Hopf algebra of $\cK_{24,1}$; and we prove  $\BN(V_{i,j,k,\iota})\sharp\gr\A_{24,1}$ is infinite-dimensional by using the classification  results in \cite{H09}. Then we prove the remaining are finite-dimensional by computing their defining relations and PBW bases in ${}_{\cK_{24,1}}^{\cK_{24,1}}\mathcal{YD}$.
Finally,  we study the liftings of finite-dimensional Nichols algebras  following the techniques in \cite{AS98,GG16}. Consequently, we have the following theorem.
\begin{thm}\label{thmA-24}[Theorems \ref{thmFDNichols-24}~\&~\ref{thmFDHopfalgebra-24}]
Let $A$ be a finite-dimensional Hopf algebra over $\cK_{24,1}$ whose infinitesimal braiding $V$ is  indecomposable in ${}_{\cK_{24,1}}^{\cK_{24,1}}\mathcal{YD}$.   Then $V$ is isomorphic either to $\K_{\chi_{i,j,k}}$ for $(i,j,k)\in\Lambda^0$
or to $V_{i,j,k,\iota}$ for $(i,j,k,\iota)\in\cup_{i=1}^6\Lambda^i$, and $A$ is isomorphic  to one of the following objects:
\begin{itemize}
  \item $\bigwedge\K_{\chi_{i,j,k}}\sharp \cK_{24,1}$ for $(i,j,k)\in\Lambda^0$;
  \item $\BN(V_{i,j,k,\iota})\sharp \cK_{24,1}$ for $(i,j,k,\iota)\in\cup_{i=1}^6\Lambda^i-\Lambda^{1\ast}$;
  \item $\mathcal{C}_{i,j,k,\iota}(\mu)$ for $\mu\in\K$ and $(i,j,k,\iota)\in\Lambda^{1\ast}$.
\end{itemize}
\end{thm}

The Nichols algebra $\BN(V_{i,j,k,\iota})$ for $(i,j,k,\iota)\in\cup_{i=4}^6\Lambda^{i}$ is isomorphic as an algebra to a quantum plane. They have appeared in \cite{AGi17} and were shown that the braidings are of non-diagonal type. The Nichols algebra $\BN(V_{i,j,k,\iota})$ for $(i,j,k,\iota)\in\cup_{i=1}^3\Lambda^i$ is an algebra of dimension $18$ or $36$ with no quadratic relations. They are examples of Nichols algebra of non-diagonal type, which are  (up to isomorphism) arising from Nichols algebras of standard type $B_2$ by using the techniques in \cite{AA18}.

The Hopf algebras $\bigwedge\K_{\chi_{i,j,k}}\sharp \cK_{24,1}$ with $(i,j,k)\in\Lambda^0$ are the duals of pointed Hopf algebras of dimension $48$.
The Hopf algebras $\BN(V_{i,j,k,\iota})\sharp \cK_{24,1}$ for $(i,j,k,\iota)\in\cup_{i=4}^6\Lambda^i$ are the duals of pointed Hopf algebras of dimension $96$, $144$ or $288$. The Hopf algebras $\BN(V_{i,j,k,\iota})\sharp \cK_{24,1}$ for $(i,j,k,\iota)\in\Lambda^1$ or $\Lambda^2\cup\Lambda^3$ are the duals of pointed Hopf algebras of dimension $432$ or $864$, respectively. The Hopf algebras $\mathcal{C}_{i,j,k,\iota}(\mu)$ with $\mu\neq 0$ are non-trivial liftings of $\BN(V_{i,j,k,\iota})\sharp \cK_{24,1}$ for $(i,j,k,\iota)\in\Lambda^{1\ast}\subset\Lambda^1$. They constitute new examples of Hopf algebras without the dual Chevally property.

The paper is organized as follows: In section \ref{Preliminary}, we recall some basic knowledge and notations of Yetter-Drinfeld modules, Nichols algebras. In section \ref{secK-24}, we introduce the structure of the Hopf algebra $\cK_{24,1}$. In section \ref{secKnichols},   we determine all finite-dimensional Nichols algebras over  simple objects in ${}_{\cK_{24,1}}^{\cK_{24,1}}\mathcal{YD}$ and present them by generators and relations. In section \ref{secK-lifting}, we determine all finite-dimensional Hopf algebras over $\cK_{24,1}$, whose infinitesimal braidings are simple objects in ${}_{\cK_{24,1}}^{\cK_{24,1}}\mathcal{YD}$.

\section{Preliminaries}\label{Preliminary}
\subsection*{Conventions.} In the paper,  the base field $\K$ is algebraically closed of characteristic zero and $\xi$
is a primitive $6$th root of unity. Let $\Z_n:=\Z/n\Z$ and $\I_{k,n}:=\{k,k+1,\ldots,n\}$ for $n\geq k\geq 0$.

Let $H$ be a Hopf algebra over $\K$.  Denote by $\G(H)$ the set of group-like elements of $H$. For any $g,h\in\G(H)$, $\Pp_{g,h}(H)=\{x\in H\mid \Delta(x)=x\otimes g+h\otimes x\}$. Our references for Hopf algebra theory are \cite{M93,R11}.

\subsection{Yetter-Drinfeld modules and Hopf algebras with a projection}
Suppose that $H$ has bijective antipode and denote by ${}^{H}_{H}\mathcal{YD}$ the category of left Yetter-Drinfeld modules over $H$. Then ${}^{H}_{H}\mathcal{YD}$ is braided monoidal with the braiding $c_{V,W}$ for $V,W\in {}^{H}_{H}\mathcal{YD}$ given by
\begin{align}\label{equbraidingYDcat}
c_{V,W}:V\otimes W\mapsto W\otimes V,\ v\otimes w\mapsto v_{(-1)}\cdot w\otimes v_{(0)},\ \forall\,v\in V, w\in W.
\end{align}
In particular, $c:=c_{V,V}$ is a linear isomorphism satisfying the
braid equation $(c\otimes\text{id})(\text{id}\otimes c)(c\otimes\text{id})=(\text{id}\otimes c)(c\otimes\text{id})(\text{id}\otimes c)$, that is, $(V, c)$ is a braided vector space. Moreover, ${}^{H}_{H}\mathcal{YD}$ is rigid. The left dual $V\As$ is defined by
\begin{align*}
\langle h\cdot f,v\rangle=\langle f,S(h)v\rangle,\quad f_{(-1)}\langle f_{(0)},v\rangle=S^{-1}(v_{(-1)})\langle f, v_{(0)}\rangle.
\end{align*}

If $H$ is finite-dimensional,  then by \cite[Proposition\,2.2.1.]{AG99}, $\HYD\cong{}_{H^{\ast}}^{H^{\ast}}\mathcal{YD}$ as braided monoidal categories  via the functor $(F,\eta)$ defined as follows :
 $F(V)=V$ as a vector space,
\begin{align}
\begin{split}\label{eqVHD}
f\cdot v=f(S(v_{(-1)}))v_{(0)},\quad \delta(v)=\sum_{i}S^{-1}(h^i)\otimes h_i\cdot v,\  \text{and}\\
\eta:F(V)\otimes F(W)\mapsto F(V\otimes W), v\otimes w\mapsto w_{(-1)}\cdot v\otimes w_{(0)},
\end{split}
\end{align}
where $V,W\in\HYD$, $f\in H^{\ast},v\in V, w\in W$, $\{h_i\}$ and $\{h^i\}$ are the dual bases of $H$ and $H\As$.

For a Hopf algebra $R\in{}^{H}_{H}\mathcal{YD}$ that is braided, set $\Delta_R(r)=r^{(1)}\otimes r^{(2)}$ for the comultiplication. By the \emph{Radford biproduct or bosonization} of $R$ by $H$ (\cite{R11}), written as $R\sharp H$, means a usual Hopf algebra, as a vector space,
$R\sharp H=R\otimes H$, whose multiplication and comultiplication are provided by the smash product and smash coproduct, respectively:
\begin{align}
(r\sharp g)(s\sharp h)=r(g_{(1)}\cdot s)\sharp g_{(2)}h,\quad
\Delta(r\sharp g)=r^{(1)}\sharp (r^{(2)})_{(-1)}g_{(1)}\otimes (r^{(2)})_{(0)}\sharp g_{(2)}.\label{eqSmash}
\end{align}

\subsection{Nichols algebras and skew-derivations}
Let $H$ be a Hopf algebra with bijective antipode and $V\in{}^{H}_{H}\mathcal{YD}$. The \emph{Nichols algebra} $\BN(V)$ over $V$ is a $\N$-graded Hopf algebra $R=\oplus_{n\geq 0} R(n)$ in ${}^{H}_{H}\mathcal{YD}$ such that
\begin{align*}
R(0)=\mathds{k}, \quad R(1)=V,\quad
R\;\text{is generated as an algebra by}\;R(1),\quad
\Pp(R)=V.
\end{align*}

The Nichols algebra $\BN(V)$ is isomorphic to $T(V)/I(V)$, where $I(V)\subset T(V)$ is the largest $\mathds{N}$-graded ideal and coideal in $\HYD$ such that $I(V)\cap V=0$. Moreover, the ideal $I(V)$ is the kernel of the quantum symmetrizer associated to the braiding $c$ and  $\BN(V)$ as a coalgebra and an algebra depends only on $(V,c)$.

\begin{rmk}\label{rmkN-infity}
Suppose $W$ is a subspace of $(V, c)$ with $c(W\otimes W)\subset W\otimes W$, then $\dim \BN(W)=\infty$ means $\dim \BN(V)=\infty$. In particular, $\dim\BN(V)=\infty$ if the braiding $c$ has an eigenvector $v\otimes v\in V^{\otimes 2}$ with eigenvalue $1$ $($cf. \cite{G00}$)$.
\end{rmk}
\begin{rmk}
The Nichols algebra $\BN(V)$ is of \emph{diagonal type} if there is a linear basis $\{x_i,\ i\in\I_{1,n}\}$ such that $c(x_i\otimes x_j)=q_{ij}x_j\otimes x_i$ for some $q_{ij}\in\K$. The matrix $\mathbf{q} = (q_{ij})_{i,j\in \I_{1,n}}$ is called the matrix of the braiding. The \emph{generalized Dynkin diagram} of the matrix $\mathbf{q}$ is a graph with $n$ vertices, the vertex $i$ labeled with $q_{ii}$, and an arrow between the vertices $i$ and $j$ only if $q_{ij}q_{ji}\neq 1$, labelled with $q_{ij}q_{ji}$. Finite-dimensional Nichols algebras of diagonal type were classified by Heckenberger \cite{H09}, with the help of the Weyl groupoid and generalized root systems. Their defining relations were given by Angiono \cite{An13,An15}. See \cite{AA17} for a survey on Nichols algebras of diagonal type.
\end{rmk}

Let $C$ be a coalgebra, $D$ a subcoalgebra of $C$ and $W\in{}^C\mathcal{M}$. Denote the largest $D$-subcomodule of $W$ by
\begin{align*}
W(D)=\{w\in W\mid\delta(w)\in D\otimes W\}.
\end{align*}
\begin{pro}\cite[Proposition 8.8]{HS13}\label{pro-HS13-8.8}
Let $H$ be a Hopf algebra with bijective antipode, $N\in\HYD$ and $W\in{}_{\BN(N)\sharp H}^{\BN(N)\sharp H}\mathcal{YD}$.
Assume that $W$ is a semisimple object in the category of $\Z$-graded left Yetter-Drinfeld modules over $\BN(N)\sharp H$.
Let $\cK=\BN(W)$ in ${}_{\BN(N)\sharp H}^{\BN(N)\sharp H}\mathcal{YD}$, and define $M=W(H)$. Then there is a unique isomorphism
\begin{align*}
\cK\sharp\BN(N)\cong\BN(M\oplus N)
\end{align*}
of braided Hopf algebras in $\HYD$ which is the identity on $M\oplus N$.
\end{pro}

Now we recall the standard tool, the so called skew-derivation, for working with Nichols algebras. Let $(V,c)$ be a  $($rigid$)$ braided vector space of dimension $n$ and $\Delta^{i,m-i}:T^m(V)\rightarrow T^{i}(V)\otimes T^{m-i}(V)$ the $(i,m-i)$-homogeneous component of the comultiplication $\Delta:T(V)\rightarrow T(V)\otimes T(V)$ for $m\in\N$ and $k\in\I_{0,m}$. Given $f\in V^{\ast}$, the \emph{skew-derivation} $\partial_f\in\text{End}\,T(V)$ is given by
\begin{gather}\label{eqSkew-1}
\partial_f(v)=(f\otimes \id)\Delta^{1,m-1}(v):T^m(V)\rightarrow T^{m-1}(V),\quad v\in T^m(V),\ m\in\N.
\end{gather}
Let $\{v_i\}_{1\leq i\leq n}$ and $\{v^i\}_{1\leq i\leq n}$ be the dual bases of $V$ and $V\As$. We write $\partial_i:=\partial_{v^i}$ for simplicity.

This is useful for seeking the relations of $\BN(V)$ due to:
\begin{align}\label{Def-Nichols-IV}
I^m(V)=\{r\in T^m(V)\mid  \partial_{f_1}\partial_{f_2}\cdots\partial_{f_m}(r)=0,\ \forall\ f_i\in V\As\}.
\end{align}
Furthermore, $\partial_f$ can be defined on $\BN(V)$ and $\mathop\cap\limits_{f\in V\As}\ker\partial_f=\K$. For details, see \cite{AHS10,AA18}.

\section{The Hopf algebra $\cK_{24,1}$ and The category ${}_{\cK_{24,1}}^{\cK_{24,1}}\mathcal{YD}$}\label{secK-24}
We introduce the structures of the Hopf algebra $\cK_{24,1}$ and the category ${}_{\cK_{24,1}}^{\cK_{24,1}}\mathcal{YD}$ of Yetter-Drinfeld modules over $\cK_{24,1}$.

\subsection{The Hopf algebra $\cK_{24,1}$}
\begin{defi}\label{proStructureofH-24}
Let $\cK_{24,1}$ be the algebra generated by the elements $a$, $b$, $c$, $d$, subject to  the relations
  \begin{gather}
  \begin{split}\label{defiH-1}
  a^{6}=1,\quad b^2=0,\quad c^2=0, \quad d^{6}=1,\quad a^2=d^2,\quad ad=da,\quad bc=0=cb,\\
  ab=\xi ba,\quad ac=\xi ca,\quad db=-\xi bd,\quad dc=-\xi cd, \quad bd=ca,\quad ba=cd.
  \end{split}
  \end{gather}
\end{defi}
$\cK_{24,1}$  admits a Hopf algebra structure, where the coalgebra structure and antipode are given as follows:
  \begin{align}
  \begin{split}\label{defiH-2-24}
  \Delta(a)=a\otimes a+b\otimes c,\quad \Delta(b)=a\otimes b+b\otimes d,\\
  \Delta(c)=c\otimes a+d\otimes c,\quad \Delta(d)=d\otimes d+c\otimes b,\\
  \epsilon(a)=1,\quad \epsilon(b)=0,\quad \epsilon(c)=0,\quad \epsilon(d)=1,\\
  \end{split}
  \end{align}
  \begin{align*}
  S(a)=a^{-1},\quad S(b)=-\xi^{-1}ca^{-2}, \quad S(c)=\xi^{-1}ba^{-2},\quad S(d)=d^{-1}=da^{-2}.
  \end{align*}

\begin{rmk}\label{rmkHdualtoA-24}
\begin{enumerate}
\item The set $ \{a^i,da^i,ba^i,ca^i,\ i\in\I_{0,5}\}$ is a linear basis of $\cK_{24,1}$.

\item $\G(\cK_{24,1})=\K\{1,a^3,da^{2},da^{5}\}$, $\Pp_{1,da^5}(\cK_{24,1})=\K\{1-da^5,ca^5\}$ and $\Pp_{1,g}(\cK_{24,1})=\K\{1-g\}$ for $g\in\K\{a^3,da^2\}$.

\item Let $\{(a^i)\As,(ba^i)\As,(ca^i)\As,(da^i)\As,\;i\in\I_{0,5}\}$ be the dual basis of $\cK_{24,1}$ and set
\begin{gather*}
\widetilde{x}=\sum_{i=0}^{5} (ba^i)^{\ast}+ (ca^i)^{\ast},\quad
\widetilde{g}=\sum_{i=0}^{5} \xi^{i}(a^i)^{\ast}+ \xi^{i+1}(da^i)^{\ast},\quad
\widetilde{h}=\sum_{i=0}^{5} (a^i)^{\ast}-(da^i)^{\ast}.
\end{gather*}
The multiplication of $\cK_{24,1}$ implies that
\begin{gather*}
\widetilde{g}^{6}=1,\quad \widetilde{h}^2=1,\quad \widetilde{h}\widetilde{g}=\widetilde{g}\widetilde{h},\quad \widetilde{g}\widetilde{x}=\widetilde{x}\widetilde{g},\quad
\widetilde{h}\widetilde{x}=-\widetilde{x}\widetilde{h},\\
\Delta(\widetilde{x})=\widetilde{x}\otimes \epsilon+\widetilde{g}\widetilde{h}\otimes \widetilde{x},\quad
\Delta(\widetilde{g})=\widetilde{g}\otimes \widetilde{g},\quad
\Delta(\widetilde{h})=\widetilde{h}\otimes \widetilde{h}.
\end{gather*}
In particular, $\G(\cK_{24,1}\As)\cong \Z_6\otimes \Z_2$ with generators $\widetilde{g}$ and $\widetilde{h}$.
\end{enumerate}
\end{rmk}

Let $\A_{24,1}$ be the Hopf algebra generated by elements $g$, $h$, $x$ subject to the relations
\begin{gather*}
g^{6}=1,\quad h^2=1,\quad gh=hg,\quad gx=xg,\quad hx=-xh,\quad x^2=1-g^2,
\end{gather*}
where $g,h\in\G(\A_{24,1})$ and $x\in\Pp_{1,gh}(\A_{24,1})$. It is a pointed Hopf algebra of dimension $24$ appeared in \cite{G00b} (see also \cite{KR06}). We now  build the Hopf algebra isomorphism $\A_{24,1}\cong \cK_{24,1}^{\ast}$.

\begin{lem}\label{lemAtoHdual-24}
Let $\psi:\A_{24,1}\mapsto \cK_{24,1}^{\ast}$ be the algebra map given by
\begin{align*}
\psi(g)=\widetilde{g},\quad
\psi(h)=\widetilde{h},\quad
\psi(x)=\sqrt{1-\xi^2}\widetilde{x}.
\end{align*}
Then $\psi$ is a Hopf algebra isomorphism.
\end{lem}
\begin{proof}
By Remark \ref{rmkHdualtoA-24} $(3)$, $\psi$ is a bialgebra morphism and $\psi(\A_{24,1})$ contains properly $\G(\cK_{24,1}\As)$. Since $\dim\G(\cK_{24,1}\As)=12$,  by the Nichols-Zoeller Theorem,  $\psi$ must be  epimorphic. The lemma follows since $\dim \A_{24,1}=\dim \cK_{24,1}\As=24$.
\end{proof}

\begin{rmk}\label{rmkAtoHdual-24}
\begin{enumerate}
\item The set $\{ g^j, g^jh, g^jx, g^jhx,\ j\in\I_{0,5}\}$ is a linear basis of $\A_{24,1}$. We have
\begin{align*}
\psi(g^j)&=\sum_{i=0}^{5} \xi^{ij}(a^i)^{\ast}+ \xi^{ij+j}(da^i)^{\ast},\quad
\psi(g^jh)=\sum_{i=0}^{5} \xi^{ij}(a^i)^{\ast}-\xi^{ij+j}(da^i)^{\ast},\\
\psi(g^jx)&=\sqrt{1-\xi^2}\sum_{i=0}^{5} \xi^{ij+j}(ba^i)^{\ast}+\xi^{ij+j}(ca^i)^{\ast},\\
\psi(g^jhx)&=\sqrt{1-\xi^2}\sum_{i=0}^{5} \xi^{ij+j}(ba^i)^{\ast}-\xi^{ij+j} (ca^i)^{\ast}.
\end{align*}
\item It is clear that $\gr\A_{24,1}=\BN(X)\sharp \K[\Gamma]$, where $\Gamma\cong  \Z_{6}\times  \Z_2$ with generators $g,h$ and $X:=\K\{x\}\in{}_{\Gamma}^{\Gamma}\mathcal{YD}$  with $g\cdot x=x,h\cdot x=-x$ and $\delta(x)=gh\otimes x$.
Furthermore,  by \cite[Proposition\,4.2]{GM10}, $\gr\A_{24,1}\cong (\A_{24,1})_{\sigma}$, where $\sigma$ is a  Hopf 2-cocycle given by
\begin{align}\label{eqSigma}
\sigma=\epsilon\otimes\epsilon- \zeta, \text{where }  \zeta(x^ig^jh^k,x^mg^nh^l)=(-1)^{mk}\delta_{2,i+m},
\end{align}
for  $ i,k,m,l\in\I_{0,1}, j,n\in\I_{0,5}$.
Then by \cite[Theorem\, 2.7]{MO99}, ${}_{\A_{24,1}}^{\A_{24,1}}\mathcal{YD}\cong{}_{\gr\A_{24,1}}^{\gr\A_{24,1}}\mathcal{YD}$ as braided monoidal categories via the tensor functor $(G, \gamma)$ defined as follows: $G(V)=V$ as vector spaces and coactions,  transforming the action $\cdot$ to
\begin{align}
\begin{split}\label{formulaecocycle}
  k\cdot_{\sigma}v=\sigma(k_{(1)},v_{(-2)})\sigma^{-1}(k_{(2)}v_{(-1)}S(k_{(4)}),k_{(5)})k_{(3)}\cdot v_{(0)},\quad k\in \gr\A_{24,1}, \\
  \gamma:G(V)\otimes G(W)\longrightarrow G(V\otimes W),\quad v\otimes w\mapsto\sigma(v_{(-1)},w_{(-1)})v_{(0)}\otimes w_{(0)}.
\end{split}
\end{align}
\end{enumerate}
\end{rmk}

Now we give explicitly the structure of the Drinfeld double $\D(\cK_{24,1}^{cop})$ of $\cK_{24,1}^{cop}$.
From now on, we set $\D:=\D(\cK_{24,1}^{cop}):=\A_{24,1}^{cop\,op}\otimes \cK_{24,1}^{cop}$ for convenience. Recall that $\D(H^{cop})=H^{\ast\, op\, cop}\otimes H^{cop}$ is a Hopf algebra with the tensor product coalgebra structure and the algebra structure given by
$
(p\otimes a)(q\otimes b)=p\langle q_{(3)}, a_{(1)}\rangle q_{(2)}\otimes a_{(2)}\langle q_{(1)}, S^{-1}(a_{(3)})\rangle b
$.
\begin{pro}
$\D=\A_{24,1}^{cop\,op}\otimes \cK_{24,1}^{cop}$ is isomorphic to the algebra generated  by the elements $g$, $h$, $x$, $a$, $b$, $c$, $d$, subject to  the relations in $\cK_{24,1}^{cop}$, the relations in $\A_{24,1}^{cop\,op}$ and
\begin{gather*}
ag=ga,\quad ah=ha,\quad dg=gd,\quad dh=hd,\\
bg=gb,\quad bh=-hb,\quad cg=gc,\quad ch=-hc,\\
ax-\xi^{-1} xa=-\sqrt{1-\xi^2}\xi^{-1}(c-ghb),\quad dx+\xi^{-1} xd=-\sqrt{1-\xi^2}\xi^{-1}(ghc-b),\\
bx-\xi^{-1} xb=-\sqrt{1-\xi^2}\xi^{-1}(d-gha),\quad cx+\xi^{-1} xc=-\sqrt{1-\xi^2}\xi^{-1}(ghd-a).
\end{gather*}
\end{pro}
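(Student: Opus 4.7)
The plan is to compute the cross-relations of $\D$ by direct application of the Drinfeld double multiplication formula stated just before the proposition, using Lemma \ref{lemAtoHdual} to identify $H\As$ with $\A$ and Remark \ref{rmkAtoHdual} for the explicit pairings.

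First I would observe that $\D=H^{\ast\, op\, cop}\otimes H^{cop}$ carries two natural Hopf subalgebra embeddings, namely $H^{cop}\hookrightarrow \D$, $u\mapsto 1\otimes u$, and $H^{\ast\, op\, cop}\hookrightarrow \D$, $p\mapsto p\otimes 1$; via $\psi$ the latter is identified with $\A^{bop}$. Consequently, the relations among $a,b,c,d$ listed in the statement are just those of $H^{cop}$ $($identical to those of $H$, since opposing the coalgebra preserves the algebra$)$, and the relations among $g,h,x$ are those of $\A^{bop}$; both sets are automatic from the embeddings. What remains is the mixed relations, which I would obtain by specializing
\[
(1\otimes u)(q\otimes 1)=\langle q_{(3)},u_{(1)}\rangle\, q_{(2)}\otimes u_{(2)}\,\langle q_{(1)},S^{-1}(u_{(3)})\rangle
\]
to each pair $(u,q)$ with $u\in\{a,b,c,d\}$ and $q\in\{g,h,x\}$.

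For the group-like generators $g$ and $h$, the formula simplifies greatly since $\Delta^{(2)}(g)=g\otimes g\otimes g$ and analogously for $h$: it reduces to $(1\otimes u)(g\otimes 1)=\langle g,u_{(1)}\rangle\langle g,S^{-1}(u_{(3)})\rangle\, g\otimes u_{(2)}$. Computing $\Delta^{(2)}(u)$ for each $u\in\{a,b,c,d\}$ from \eqref{defiH-2} and plugging in the pairings read off from Remark \ref{rmkAtoHdual} $($only $a^i,da^i$ pair nontrivially with $\widetilde{g}$, and only $a^i$ pair nontrivially with $\widetilde{h}$$)$, one reads off the eight relations $ug=gu$, $bg=gb$, $ah=ha$, $bh=-hb$ and their analogues for $c,d$ directly, essentially by inspection of which terms survive.

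The main obstacle will be the four relations involving $x$. Since $\Delta_\A(x)=x\otimes 1+gh\otimes x$, in $\A^{bop}$ the coproduct becomes $1\otimes x+x\otimes gh$, and its iterated version contributes several nonzero terms. Each must be paired against one leg of $\Delta^{(2)}(u)$ via the nontrivial values $\langle\widetilde{x},ba^i\rangle=\langle\widetilde{x},ca^i\rangle=1$, while the normalization $\psi(x)=\sqrt{1-\xi^2}\,\widetilde{x}$ accounts for the prefactor $\sqrt{1-\xi^2}\,\xi^{-1}$ that appears in every $x$-relation. The cross-appearance of $b,c$ on the right-hand side of the relations for $a,d$ $($and of $a,d$ in those for $b,c$$)$ is a direct echo of the off-diagonal terms in the coproducts \eqref{defiH-2}. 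After careful tracking of powers of $\xi$ and of the signs produced by $S^{-1}$ $($using $S(a)=a^{-1}$, $S(d)=da^{-2}$, etc.$)$, the four stated relations will drop out. Finally, since the full list of relations permits any word to be rewritten in the normal form ``$\A^{bop}$-monomial times $H^{cop}$-monomial'', the presented algebra has dimension at most $24\cdot 24=576=\dim\D$, so the obvious surjection onto $\D$ is an isomorphism.
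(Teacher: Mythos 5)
The paper gives no written proof of this proposition — it is left as a routine verification via the double's multiplication rule — and your proposal is exactly that intended computation, together with the normal-form/dimension count $24\cdot 24=576=\dim\D$ needed to see that the listed relations are actually defining, which is the right way to close the argument. One small correction to your parenthetical: $\widetilde h$ also pairs nontrivially with the elements $da^i$ (with value $-1$), not only with the $a^i$; these signs are precisely what produce $bh=-hb$, $ch=-hc$ and $dh=hd$, so keep them when you carry out the inspection.
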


\subsection{The representation of $\D(\cK_{24,1}^{cop})$}
 We begin this subsection by describing one-dimensional objects in ${}_{\D}\mathcal{M}$.
\begin{lem}\label{lemOnesimpleD-24}
For $(i,j,k)\in\I_{0,1}\times\I_{0,1}\times\I_{0,5}$, there is a one-dimensional object $\K_{\chi_{i,j,k}}\in{}_{\D}\mathcal{M}$, where $\chi_{i,j,k}$ is  given by
\begin{gather*}
\chi_{i,j,k}(g)=(-1)^i,\quad\chi_{i,j,k}(h)=(-1)^j,\quad \chi_{i,j,k}(x)=0,\\
\chi_{i,j,k}(a)=\xi^k,\quad \chi_{i,j,k}(b)=0,\quad\chi_{i,j,k}(c)=0,\quad\chi_{i,j,k}(d)=(-1)^i(-1)^j\xi^k.
\end{gather*}
Any one-dimensional object in ${}_{\D}\mathcal{M}$ is isomorphic to $\K_{\chi_{i,j,k}}$ for some $(i,j,k)\in\I_{0,1}\times\I_{0,1}\times\I_{0,5}$.
\end{lem}
\begin{proof}
Let $\chi\in \G(\D^{\ast})$. The relations $a^{6}=1=g^{6}$, $d^{6}=1=h^2$, $b^2=0=c^2$ imply that $\chi(a)^{6}=1=\chi(g)^{6}$, $\chi(d)^{6}=1=\chi(h)^{2}$, $\chi(b)=0=\chi(c)$. Then the relations $hx=-xh$ and $x^2=1-g^2$ yield $\chi(x)=0$ and $\chi(g)^2=1$. From the relation $bx-\xi^{-1} xb=-\sqrt{1-\xi^2}\xi^{-1}(d-gha)$, it follows that $\chi(d)=\chi(g)\chi(g)\chi(a)$. Therefore, $\chi$ is completely determined by $\chi(a)$, $\chi(g)$ and $\chi(h)$ and then $\chi=\chi_{i,j,k}$ for some $(i,j,k)\in\I_{0,1}\times\I_{0,1}\times\I_{0,5}$. Consequently, any one-dimensional $\D$-module is isomorphic to $\K_{\chi_{i,j,k}}$ for some $(i,j,k)\in\I_{0,1}\times\I_{0,1}\times\I_{0,5}$. It is clear that these modules are pairwise non-isomorphic in ${}_{\D}\mathcal{M}$.
\end{proof}

Next, we describe two-dimensional simple objects in ${}_{\D}\mathcal{M}$. Let
\begin{align*}
\Lambda:=\{(i,j,k,\iota)\mid i,j\in \I_{0,5},\;  k,\iota\in \I_{0,1},\; j+3k\neq 3(\iota+1)\mod 6\}.
\end{align*}
Clearly, $|\Lambda|=120$.
\begin{lem}\label{lemTwosimpleD-24}
For any $(i,j,k,\iota)\in\Lambda$, there is a $2$-dimensional simple object  $V_{i,j,k,\iota}\in{}_{\D}\mathcal{M}$, whose matrices defining the $\D$-action on a fixed basis are given by
\begin{align*}
    [a]&=\left(\begin{array}{ccc}
                                   (-1)^{\iota+1}\xi^i & 0\\
                                    0  &  (-1)^{\iota+1}\xi^{i-1}
                                 \end{array}\right),\
    [d]=\left(\begin{array}{ccc}
                                   \xi^i & 0\\
                                    0  & -\xi^{i-1}
                                 \end{array}\right),\
    [b]=\left(\begin{array}{ccc}
                                   0 & (-1)^{\iota}\\
                                   0 & 0
                                 \end{array}\right),\\
    [c]&=\left(\begin{array}{ccc}
                                   0 & 1\\
                                   0 & 0
                                 \end{array}\right),\
    [g]=\left(\begin{array}{ccc}
                                   \xi^j & 0\\
                                   0   & \xi^j
                                 \end{array}\right),\
    [h]=\left(\begin{array}{ccc}
                                   (-1)^k & 0\\
                                   0   & (-1)^{k+1}
                                 \end{array}\right),\\
    [x]&=\left(\begin{array}{ccc}
                                   0 & (1-\xi^2)^{-\frac{1}{2}}\xi^{1-i}(\xi^j(-1)^k-(-1)^{\iota})\\
                                   -\sqrt{1-\xi^2}\xi^{i-1}(\xi^j(-1)^k+(-1)^{\iota})& 0
                                 \end{array}\right).
\end{align*}
Any two-dimensional simple object in ${}_{\D}\mathcal{M}$ is isomorphic to $V_{i,j,k,\iota}$ for  some $(i,j,k,\iota)\in\Lambda$. Furthermore, $V_{i,j,k,\iota}\cong V_{p,q,r,\kappa}$ if and only if $(i,j,k,\iota)=(p,q,r,\kappa)$.
\end{lem}
\begin{proof}
Let $V$ be a simple $\D$-module of dimension $2$. As the generators $g$, $h$, $a$, $d$ commute with each other and $g^{6}=h^2=a^{6}=d^{6}=1$, we may assume that the matrix defining the action on $V$ are of the form
\begin{align*}
    [g]&=\left(\begin{array}{ccc}
                                   g_1 & 0\\
                                   0   & g_2
                                 \end{array}\right),\,
    [h]=\left(\begin{array}{ccc}
                                   h_1 & 0\\
                                   0   & h_2
                                 \end{array}\right),\,
    [x]=\left(\begin{array}{ccc}
                                   x_1 & x_2\\
                                   x_3 & x_4
                                 \end{array}\right),\,
    [a]=\left(\begin{array}{ccc}
                                   a_1 & 0\\
                                    0  & a_2
                                 \end{array}\right),\\
    [d]&=\left(\begin{array}{ccc}
                                   d_1 & 0\\
                                    0  & d_2
                                 \end{array}\right),\quad
    [b]=\left(\begin{array}{ccc}
                                   b_1 & b_2\\
                                   b_3 & b_4
                                 \end{array}\right),\quad
    [c]=\left(\begin{array}{ccc}
                                   c_1 & c_2\\
                                   c_3 & c_4
                                 \end{array}\right),
\end{align*}
where $a_1^{6}=1=a_2^{6}$, $d_1^{6}=1=d_2^{6}$, $g_1^{6}=1=g_2^{6}$, $h_1^2=1=h_2^2$. The relations $xh=-hx$, $bh=-hb$ and $ch=-hc$ imply that
\begin{align*}
x_1&=0=x_4, \quad(h_1+h_2)x_2=0=(h_1+h_2)x_3, \\
b_1&=0=b_4, \quad(h_1+h_2)b_2=0=(h_1+h_2)b_3, \\
c_1&=0=c_4, \quad(h_1+h_2)c_2=0=(h_1+h_2)c_3.
\end{align*}
We claim that $h_1=-h_2$. Indeed, if $h_1+h_2\neq 0$, then   $x_2=0=x_3$, $b_2=0=b_3$, $c_2=0=c_3$, which implies that $[b]$, $[c]$, $[x]$ are zero matrices  and hence $V$ is not a simple $\D$-module.

Now we claim that $g_1=g_2$. Indeed, if $g_1\neq g_2$, then the relations $gx=xg$, $bg=gb$ and $cg=gc$ yield
$(g_1-g_2)x_2=0=(g_1-g_2)x_3,$ $(g_1-g_2)b_2=0=(g_1-g_2)b_3$, $(g_1-g_2)c_2=0=(g_1-g_2)c_3$,
 which implies that  $[b]$, $[c]$, $[x]$ are zero matrices and hence $V$ is not simple.

From the relations $b^2=0=c^2$ and $bc=0=cb$, we have that
\begin{align*}
b_2b_3=0=c_2c_3,\quad b_2c_3=0=b_3c_2,\quad c_2b_3=0=c_3b_2.
\end{align*}
By permuting the elements of the basis, we may assume that $b_3=0=c_3$.
The relations $ax-\xi^{-1} xa=-\sqrt{1-\xi^2}\xi^{-1}(c-ghb)$ and $dx+\xi^{-1} xd=-\sqrt{1-\xi^2}\xi^{-1}(ghc-b)$ imply that
\begin{align}
\begin{split}\label{eq1}
a_1x_2-\xi^{-1}a_2x_2&=-\sqrt{1-\xi^2}\xi^{-1}(c_2-g_1h_1b_2),\\
a_2x_3-\xi^{-1} a_1x_3&=-\sqrt{1-\xi^2}\xi^{-1}(c_3-g_2h_2b_3),
\end{split}\\
\begin{split}\label{eq2}
d_1x_2+\xi^{-1} d_2x_2&=-\sqrt{1-\xi^2}\xi^{-1}(g_1h_1c_2-b_2),\  d_2x_3+\xi^{-1} \\ d_1x_3&=-\sqrt{1-\xi^2}\xi^{-1}(g_2h_2c_3-b_3).
\end{split}
\end{align}

We claim that $b_2\neq 0$ or $c_2\neq 0$. Suppose that  $b_2=0=c_2$. Then  $x_2x_3\neq 0$ and  by equations $\eqref{eq1}$, $\eqref{eq2}$, we have
\begin{align*}
a_1x_2-\xi^{-1} a_2x_2=0,\, a_2x_3-\xi^{-1} a_1x_3=0,\, d_1x_2+\xi^{-1} d_2x_2=0,\, d_2x_3+\xi^{-1} d_1x_3=0.
\end{align*}
 Hence $a_1-\xi^{-1} a_2=0$ and $a_2-\xi^{-1} a_1=0$, which implies that $a_1=0=a_2$, a contradiction. Thus the claim follows. We may also assume that \textbf{$c_2=1$}.

The relations $ab=\xi ba$, $ac=\xi ca$, $db=-\xi bd$ and $dc=-\xi cd$ imply  $a_1=\xi a_2$, $d_1=-\xi d_2$.
The relations $bd=ca$ and $ba=cd$ yield $b_2^2=1$ and $a_2=b_2d_2$. From the relations $bx-\xi^{-1} xb=-\sqrt{1-\xi^2}\xi^{-1}(d-gha)$ and $cx+\xi^{-1} xc=-\sqrt{1-\xi^2}\xi^{-1}(ghd-a)$, it follows that
\begin{align*}
b_2x_3&=-\sqrt{1-\xi^2}\xi^{-1}(d_1-g_1h_1a_1),\, b_2x_3=\sqrt{1-\xi^2}(d_2-g_2h_2a_2),\\
x_3&=-\sqrt{1-\xi^2}\xi^{-1}(g_1h_1d_1-a_1),\, x_3=-\sqrt{1-\xi^2}(g_2h_2d_2-a_2),
\end{align*}
which implies that $x_3=-\sqrt{1-\xi^2}\xi^{-1}(b_2+g_1h_1)d_1$. By equations $\eqref{eq1}$ and $\eqref{eq2}$,  $x_2=(1-\xi^2)^{-\frac{1}{2}}\xi d_1^{-1}(g_1h_1-b_2)$.

The relations $x^2=1-g^2$ and $a^2=d^2$ imply $x_2x_3=1-g_1^2$ and $a_1^2-d_1^2=0=a_2^2-d_2^2$. Indeed, since $a_2=b_2d_2$, $a_1=\xi a_2$ and $d_2=-\xi^{-1} d_1$, it follows that $a_1=-b_2d_1$ and hence $a_1^2-d_1^2=0=a_2^2-d_2^2$. Similarly, the relation $x_2x_3=1-g_1^2$ holds.

Consequently,  we have
\begin{align*}
    [a]&=\left(\begin{array}{ccc}
                                   -\Lam_4\Lam_1 & 0\\
                                    0  &  -\xi^{-1} \Lam_4\Lam_1
                                 \end{array}\right),\,
    [d]=\left(\begin{array}{ccc}
                                   \Lam_1 & 0\\
                                    0  & -\xi^{-1} \Lam_1
                                 \end{array}\right),\,
    [b]=\left(\begin{array}{ccc}
                                   0 & \Lam_4\\
                                   0 & 0
                                 \end{array}\right),\\
    [c]&=\left(\begin{array}{ccc}
                                   0 & 1\\
                                   0 & 0
                                 \end{array}\right),\,
    [g]=\left(\begin{array}{ccc}
                                   \Lam_2 & 0\\
                                   0   & \Lam_2
                                 \end{array}\right),\,
    [h]=\left(\begin{array}{ccc}
                                   \Lam_3 & 0\\
                                   0   & -\Lam_3
                                 \end{array}\right),\\
    [x]&=\left(\begin{array}{ccc}
                                   0 & (1-\xi^2)^{-\frac{1}{2}}\xi \Lam_1^{-1}(\Lam_2\Lam_3-\Lam_4)\\
                                   -\sqrt{1-\xi^2}\xi^{-1}\Lam_1(\Lam_2\Lam_3+\Lam_4)& 0
                                 \end{array}\right),
\end{align*}
where $\Lam_1^{6}=1$, $\Lam_2^{6}=1$, $\Lam_3^2=1$, $\Lam_4^2=1$ and $\Lam_2\Lam_3+\Lam_4\neq 1$. Set $\Lam_1=\xi^i$, $\Lam_2=\xi^j$, $\Lam_3=(-1)^k$ and $\Lam_4=(-1)^\iota$ for some $(i,j,k,\iota)\in\Lambda$. Then $V\cong V_{i,j,k,\iota}$.

Now we claim that $V_{i,j,k,\iota}\cong V_{p,q,r,\kappa}$ if and only if $(i,j,k,\iota)=(p,q,r,\kappa)$ in $\Lambda$.
Let $\Phi:V_{i,j,k,\iota}\mapsto V_{p,q,r,\kappa}$ be an isomorphism of $\D$-modules  and
$[\Phi]=(p_{i,j})_{i,j=1,2}$   the matrix of $\Phi$ in the given basis. Then $[c][\Psi]=[\Psi][c]$ and $[a][\Psi]=[\Psi][a]$, which implies that $p_{21}=0,\,p_{11}=p_{22}$ and $(\xi^p-\xi^i)p_{11}=0,\,(\xi^p-\xi^{i+1})p_{12}=0$. Consequently, $\xi^i=\xi^p$ and then  $p_{12}=0$ and $[\Phi]=p_{11}I$ where $I$ is the identity matrix. Similarly, we have $\xi^j=\xi^q$, $k=r$, $\iota=\kappa$.  The claim follows. \end{proof}

Finally, we describe all simple objects in ${}_{\D}\mathcal{M}$  up to isomorphism.
\begin{thm}\label{thmsimplemoduleD-24}
There exist $144$ simple  objects in ${}_{\D}\mathcal{M}$  up to isomorphism, among which $24$ one-dimensional objects are given in
Lemma \ref{lemOnesimpleD-24} and $120$ two-dimensional simple objects are given in Lemma \ref{lemTwosimpleD-24}.
\end{thm}
\begin{proof}
 By \cite[Proposition\,10.6.16.]{M93}, \cite[Proposition 2.2.1]{AG99} and Remark \ref{rmkAtoHdual-24},
 ${}_{\D}\mathcal{M}\cong {}^{\cK_{24,1}}_{\cK_{24,1}}\mathcal{YD}\cong {}^{\A_{24,1}}_{\A_{24,1}}\mathcal{YD}\cong {}^{\gr\A_{24,1}}_{\gr\A_{24,1}}\mathcal{YD}\cong{}_{\D(\gr\A_{24,1})}\mathcal{M}$. On the other hand, $\D(\gr\A)$ is isomorphic to  a lifting of a quantum plane, which is generated by the elements $g_1,g_2,g_3,g_4, x_1, x_2$, subject to the relations
\begin{gather*}
g_ig_j=g_jg_i,\quad  g_{1+k}^6=g_{2+k}^2=1,\quad x_k^2=0, \quad i,j\in\I_{0,5},k\in\I_{0,1},\\
x_1x_2+x_2x_1=g_1g_2g_4-1, \quad g_ix_1=\chi(g_i)x_1g_i,\quad g_ix_2=\chi^{-1}(g_i)x_2g_i,
\end{gather*}
where $\chi(g_1)=1,\chi(g_2)=\chi(g_4)=-1$, $\chi(g_3)=\xi$, $\Delta(g_i)=g_i\otimes g_i$, $\Delta(x_1)=x_1\otimes 1+ g_1g_2\otimes x_1$ and $\Delta(x_2)=x_2\otimes 1+g_4\otimes x_2$.
Then by \cite[Theorem 3.5]{AB04},  $\dim V<3$ for
 any simple $\D(\gr\A_{24,1})$-module $V$. Consequently, the proposition follows.
\end{proof}

\subsection{The category ${}_{\cK_{24,1}}^{\cK_{24,1}}\mathcal{YD}$}
Using the equivalence ${}_{\D(\cK_{24,1}^{cop})}\mathcal{M}\cong{}_{\cK_{24,1}}^{\cK_{24,1}}\mathcal{YD}$, we describe explicitly  simple objects in ${}_{\cK_{24,1}}^{\cK_{24,1}}\mathcal{YD}$. Using the equivalences ${}_{\cK_{24,1}}^{\cK_{24,1}}\mathcal{YD}\cong {}_{\A_{24,1}}^{\A_{24,1}}\mathcal{YD}\cong {}_{\gr\A_{24,1}}^{\gr\A_{24,1}}\mathcal{YD}$, we transport the information from the category ${}_{\cK_{24,1}}^{\cK_{24,1}}\mathcal{YD}$ to ${}_{\gr\A_{24,1}}^{\gr\A_{24,1}}\mathcal{YD}$.

\begin{lem}\label{lemoneSimpleobject-24}
Let $\K_{\chi_{i,j,k}}=\K\{v\}\in{}_{\D}\mathcal{M}$ for $(i,j,k)\in \I_{0,1}\times \I_{0,1}\times \I_{0,5}$. Then  $\K_{\chi_{i,j,k}}\in{}_{\cK_{24,1}}^{\cK_{24,1}}\mathcal{YD}$  with
\begin{align*}
a\cdot v&=\xi^k v,\quad b\cdot v=0, \quad c\cdot v=0,\quad d\cdot v=(-1)^{i+j}\xi^k v;\quad\delta(v)=d^ja^{3i-j}\otimes v.
\end{align*}
\begin{proof}
The $\cK_{24,1}$-action is given by the restriction of the character of $\D$. The coaction is of the form $\delta(v)=t\otimes v$, where $t\in G(\cK_{24,1})=\{1,a^3,da^{2},da^{-1}\}$ such that $\langle g, t\rangle v=(-1)^iv$ and $\langle h, t\rangle v=(-1)^jv$. Therefore, $\delta(v)=d^ja^{3i-j}\otimes v$.
\end{proof}
\end{lem}

\begin{cor}\label{cor-V-gr-1-dim}
Let $\K_{\chi_{i,j,k}}=\K\{v\}\in{}_{\cK_{24,1}}^{\cK_{24,1}}\mathcal{YD}$. Then $\K_{\chi_{i,j,k}}\in{}_{\gr\A_{24,1}}^{\gr\A_{24,1}}\mathcal{YD}$ with
\begin{align*}
g\cdot v=(-1)^iv,\quad h\cdot v=(-1)^jv,\quad x\cdot v=0,\quad \delta(v)=g^{-k}h^{i+j}\otimes v.
\end{align*}
\end{cor}
\begin{proof}
Since $\cK_{24,1}\cong\A_{24,1}\As$, by \cite[Proposition\,2.2.1.]{AG99}, we have the equivalence
${}_{\cK_{24,1}}^{\cK_{24,1}}\mathcal{YD}\cong {}_{\A_{24,1}}^{\A_{24,1}}\mathcal{YD}$ via the functor $(F,\eta)$ defined by \eqref{eqVHD}. More precisely,  by the formula \eqref{eqVHD},  Lemma \ref{lemoneSimpleobject-24} and Remark \ref{rmkAtoHdual-24} (1), we have $F(\K_{\chi_{i,j,k}})=\K_{\chi_{i,j,k}}\in{}_{\A_{24,1}}^{\A_{24,1}}\mathcal{YD}$ with
\begin{align*}
g\cdot v&=\langle g, S(d^ja^{3i-j})\rangle  v=\langle g, d^{j}a^{-j-3i}\rangle  v=(-1)^iv,\\
h\cdot v&=\langle h, d^{j}a^{-j-3i}\rangle  v=(-1)^jv,\quad
x\cdot v=\langle x, d^{j}a^{-j-3i}\rangle  v=0,\\
\delta(v)&=\sum_{i}S^{-1}((a^i)^\ast)\otimes a^i\cdot v+S^{-1}((da^i)^\ast)\otimes da^i\cdot v\\
&=S^{-1}(g^kh^{i+j})\otimes v=g^{-k}h^{i+j}\otimes v.
\end{align*}

Then by Remark \ref{rmkAtoHdual-24} (2), we have ${}_{\A_{24,1}}^{\A_{24,1}}\mathcal{YD}\cong {}_{\gr\A_{24,1}}^{\gr\A_{24,1}}\mathcal{YD}$ via the functor $(G,\gamma)$ defined by the formulae \eqref{eqSigma}--\eqref{formulaecocycle}, and then  $GF(\K_{\chi_{i,j,k}})=\K_{\chi_{i,j,k}}\in {}_{\gr\A_{24,1}}^{\gr\A_{24,1}}\mathcal{YD}$ with the module structure given by
\begin{align*}
h\cdot_{\sigma}v&=\sigma(h_{(1)},v_{(-2)})\sigma^{-1}(h_{(2)}v_{(-1)}S(h_{(4)}),h_{(5)})h_{(3)}\cdot v_{(0)}\\
&=\sigma(h,g^{-k}h^{i+j})\sigma^{-1}(g^{-k}h^{i+j},h)h\cdot v=h\cdot v=(-1)^jv,\\
g\cdot_{\sigma}v&=\sigma(g,g^{-k}h^{i+j})\sigma^{-1}(gg^{-k}h^{i+j}S(g),g)g\cdot v\\
&=\sigma(g,g^{-k}h^{i+j})\sigma^{-1}(g^{-k}h^{i+j},g)g\cdot v=g\cdot v=(-1)^iv,\\
x\cdot_{\sigma}v&=\sigma(x_{(1)},v_{(-2)})\sigma^{-1}(x_{(2)}v_{(-1)}S(x_{(4)}),x_{(5)})x_{(3)}\cdot v_{(0)}\\
&=\sigma(x,g^{-k}h^{i+j})\sigma^{-1}(g^{-k}h^{i+j},1)1\cdot v
+\sigma(gh,g^{-k}h^{i+j})\sigma^{-1}(xg^{-k}h^{i+j},1)1\cdot v\\
&\quad+\sigma(gh,g^{-k}h^{i+j})\sigma^{-1}(ghg^{-k}h^{i+j},1)x\cdot v\\
&\quad+\sigma(gh,g^{-k}h^{i+j})\sigma^{-1}(ghg^{-k}h^{i+j}S(x),1)gh\cdot v\\
&\quad+\sigma(gh,g^{-k}h^{i+j})\sigma^{-1}(ghg^{-k}h^{i+j}S(gh),x)gh\cdot v\\
&=\sigma(x,g^{-k}h^{i+j})\sigma^{-1}(g^{-k}h^{i+j},1) v
+\sigma(gh,g^{-k}h^{i+j})\sigma^{-1}(xg^{-k}h^{i+j},1) v\\
&\quad+\sigma(gh,g^{-k}h^{i+j})\sigma^{-1}(g^{1-k}h^{i+j+1},1)x\cdot v\\
&\quad+\sigma(gh,g^{-k}h^{i+j})\sigma^{-1}(g^{1-k}h^{i+j+1}S(x),1)gh\cdot v\\
&\quad+\sigma(gh,g^{-k}h^{i+j})\sigma^{-1}(g^{-k}h^{i+j},x)gh\cdot v\\
&=x\cdot v=0.
\end{align*}
\end{proof}

\begin{lem}\label{lemtwoSimpleobject-24}
  Let $V_{i,j,k,\iota}=\K\{v_1,v_2\}\in {}_{\D}\mathcal{M}$ for any $(i,j,k,\iota)\in\Lambda$. Then $V_{i,j,k,\iota}\in{}_{\cK_{24,1}}^{\cK_{24,1}}\mathcal{YD}$ with the module structure given by
\begin{align*}
a\cdot v_1&=(-1)^{\iota+1}\xi^iv_1,\quad b\cdot v_1=0,\quad c\cdot v_1=0,\quad d\cdot v_1=\xi^iv_1,\\
a\cdot v_2&=(-1)^{\iota+1}\xi^{i-1} v_2,\quad b\cdot v_2=(-1)^{\iota}v_1,\quad c\cdot v_2=v_1,\quad d\cdot v_2=-\xi^{i-1}v_2;
\end{align*}
and the comodule structure given by
\begin{enumerate}
  \item for $k=0$,
  \begin{align*}
  \delta(v_1)&=a^j\otimes v_1+(1-\xi^2)^{-\frac{1}{2}}x_2ba^{j-1}\otimes v_2,\\
  \delta(v_2)&=da^{j-1}\otimes v_2+(1-\xi^2)^{-\frac{1}{2}}x_1ca^{j-1}\otimes v_1;
  \end{align*}
  \item for $k=1$,
  \begin{align*}
  \delta(v_1)&=da^{j-1}\otimes v_1+(1-\xi^2)^{-\frac{1}{2}}x_2ca^{j-1}\otimes v_2,\\
  \delta(v_2)&=a^{j}\otimes v_2+(1-\xi^2)^{-\frac{1}{2}}x_1ba^{j-1}\otimes v_1;
  \end{align*}
\end{enumerate}
where $x_1=(1-\xi^2)^{-\frac{1}{2}}\xi^{1-i}(\xi^j(-1)^k-(-1)^{\iota})$ and $x_2=-\sqrt{1-\xi^2}\xi^{i-1}(\xi^j(-1)^k+(-1)^{\iota})$.
\end{lem}
\begin{proof}
Let $\{h_i\}_{i\in\I_{1,24}}$ and $\{h^i\}_{i\in\I_{1,24}}$ be the dual bases of $\cK_{24,1}$ and $\cK_{24,1}\As$. The $\cK_{24,1}$-action is given by the restriction of the $\D$-action and the $\cK_{24,1}$-comodule structure is given by $\delta(v)=\sum_{i=1}^{24}h_i\otimes h^i\cdot v$ for any $v\in V_{i,j,k}$.
By Lemma \ref{lemAtoHdual-24} and Remark \ref{rmkAtoHdual-24}, we have
\begin{align*}
(g^l)^\ast&=\frac{1}{12}\sum_{i=0}^{5} \xi^{-il}a^i+\xi^{-(i+1)l}da^i ,\quad
(g^lh)^\ast=\frac{1}{12} \sum_{i=0}^{5} \xi^{-il}a^i -\xi^{-(i+1)l} da^i ,\\
(g^lx)^\ast&=\frac{1}{12\sqrt{1-\xi^2}} \sum_{i=0}^{5} \xi^{-(i+1)l}ba^i  + \xi^{-(i+1)l} ca^i ,\\
(g^lhx)^\ast&=\frac{1}{12\sqrt{1-\xi^2}} \sum_{i=0}^{5} \xi^{-(i+1)l}ba^i -\xi^{-(i+1)l}ca^i .
\end{align*}
 Then the lemma follows by a direct computation.
 \end{proof}

\begin{rmk}\label{rmkDdual-H-24}
$V_{i,j,k,\iota}\As\cong V_{-i+4,-j,k+1,\iota+1}$ for all $(i,j,k,\iota)\in \Lambda$.
\end{rmk}
In this section, for $(i,j,k,\iota)\in\Lambda$, set
\begin{align*}
 x_1:&=(1-\xi^2)^{-\frac{1}{2}}\xi^{1-i}(\xi^j(-1)^k-(-1)^{\iota}), \\ x_2:&=-\sqrt{1-\xi^2}\xi^{i-1}(\xi^j(-1)^k+(-1)^{\iota}).
\end{align*}

\begin{cor}\label{rmkVA2-24}
Let $V_{i,j,k,\iota}=\K\{v_1,v_2\}\in{}_{\cK_{24,1}}^{\cK_{24,1}}\mathcal{YD}$ for $(i,j,k,\iota)\in\Lambda$. Then $V_{i,j,k,\iota}\in{}_{\gr\A_{24,1}}^{\gr\A_{24,1}}\mathcal{YD}$ with the module structure given by
\begin{align*}
g\cdot v_1&=\xi^{-j}v_1,\quad h\cdot v_1=(-1)^kv_1,\quad x\cdot v_1=(-1)^{k+1}x_2\xi^{-j}v_2,\\
g\cdot v_2&=\xi^{-j}v_2,\quad h\cdot v_2=(-1)^{k+1}v_2,\quad x\cdot v_2=0;
\end{align*}
and the comodule structure given as follows:
\begin{enumerate}
  \item for $\iota=0$,
  \begin{align*}
  \delta(v_1)=g^{-3-i}h\otimes v_1,\quad\delta(v_2)=g^{-2-i}\otimes v_2+\xi^{1-i}(1-\xi^2)^{-\frac{1}{2}}g^{-3-i}hx\otimes v_1;
  \end{align*}
  \item for $\iota=1$,
  \begin{align*}
  \delta(v_1)=g^{-i}\otimes v_1,\quad\delta(v_2)=g^{-i+1}h\otimes v_2-\xi^{1-i}(1-\xi^2)^{-\frac{1}{2}}g^{-i}x\otimes v_1.
  \end{align*}
\end{enumerate}
\end{cor}
\begin{proof}
Similar to the proof of Corollary \ref{cor-V-gr-1-dim}, using the equivalence ${}_{\cK_{24,1}}^{\cK_{24,1}}\mathcal{YD}\cong {}_{\A_{24,1}}^{\A_{24,1}}\mathcal{YD}$ via the functor $(F,\eta)$ defined by \eqref{eqVHD}, by  Lemma \ref{lemtwoSimpleobject-24}, we have
 $F(V_{i,j})\in{}_{\A_{24,1}}^{\A_{24,1}}\mathcal{YD}$ with the  comodule structure given in the corollary and the module structure given by
\begin{align*}
g\cdot v_1&=\xi^{-j}v_1,\quad h\cdot v_1=(-1)^kv_1,\quad x\cdot v_1=(-1)^{k+1}x_2\xi^{-j}v_2,\\
g\cdot v_2&=\xi^{-j}v_2,\quad h\cdot v_2=(-1)^{k+1}v_2,\quad x\cdot v_2=(-1)^{k+1}x_1\xi^{-j}v_1;
\end{align*}
Then using the equivalence $ {}_{\A_{24,1}}^{\A_{24,1}}\mathcal{YD}\cong {}_{\gr\A_{24,1}}^{\gr\A_{24,1}}\mathcal{YD}$ via the functor $(G,\gamma)$ defined by the formulae \eqref{eqSigma}--\eqref{formulaecocycle}, by a direct and tedious computation, we have $GF(V_{i,j,k,\iota})=V_{i,j,k,\iota}\in{}_{\gr\A_{24,1}}^{\gr\A_{24,1}}\mathcal{YD}$ with the structure given in the corollary.
\end{proof}

Finally, we describe the braiding of simple objects in ${}_{\cK_{24,1}}^{\cK_{24,1}}\mathcal{YD}$.
\begin{lem}\label{lembraidingone-24}
Let $\K_{\chi_{i,j,k}}=\K\{v\}\in{}_{\cK_{24,1}}^{\cK_{24,1}}\mathcal{YD}$ for $(i,j,k)\in \I_{0,1}\times\I_{0,1}\times \I_{0,5}$. Then the braiding of $\K_{\chi_{i,j,k}}$  is given by $c(v\otimes v)=(-)^{i(j+k)+j}v\otimes v$.
\end{lem}
\begin{proof}
By the formula \eqref{equbraidingYDcat} of the braiding in ${}_{\cK_{24,1}}^{\cK_{24,1}}\mathcal{YD}$  and Lemma \ref{lemoneSimpleobject-24}, we have
\begin{align*}
c(v\otimes v)=d^ja^{3i-j}\cdot v\otimes v=\xi^{(3i-j)k}(-1)^{(i+j)j}\xi^{jk}v\otimes v=(-1)^{ik+ij+j^2}v\otimes v.
\end{align*}
\end{proof}

For $(i,j,k,\iota)\in\Lambda$, we set
\begin{align*}
a_{12}:&=[(-1)^{(\iota+1)j}\xi^{ij}+(-1)^{(j-1)\iota}\xi^{(i+2)j}], \\ a_{11}:&=\frac{1}{1-\xi^2}(-1)^{\iota(j-1)}\xi^{(j-2)i+2j-1}[\xi^j-(-1)^{\iota}],\\
b_{12}:&=[(-1)^{(j-1)(\iota+1)}\xi^{ij}+(-1)^{j\iota}\xi^{(i+2)j}],\\
b_{11}:&=\frac{1}{1-\xi^2}(-1)^{j\iota}\xi^{2j+2+ij+4i}[\xi^j+(-1)^{\iota}].
\end{align*}
\begin{lem}\label{lembraidsimpletwo-24}
Let $V_{i,j,k,\iota}=\K\{v_1,v_2\}\in{}_{\cK_{24,1}}^{\cK_{24,1}}\mathcal{YD}$ for $(i,j,k,\iota)\in\Lambda$. Then the braiding of $V_{i,j,k,\iota}$  is given as follows:
\begin{enumerate}
  \item If $k=0$, then $c(\left[\begin{array}{ccc} v_1\\v_2\end{array}\right]\otimes\left[\begin{array}{ccc} v_1~v_2\end{array}\right])=$
  \begin{align*}
  \left[\begin{array}{ccc}
(-1)^{(\iota+1)j}\xi^{ij}v_1\otimes v_1&(-1)^{j\iota}\xi^{(i+2)j}v_2\otimes v_1+a_{12}v_1\otimes v_2\\
(-1)^{(\iota+1)(j-1)}\xi^{ij}v_1\otimes v_2 &(-1)^{(j-1)\iota}\xi^{(i+2)j}v_2\otimes v_2+a_{11}v_1\otimes v_1
  \end{array}\right].
  \end{align*}
  \item If $k=1$, then $ c(\left[\begin{array}{ccc} v_1\\v_2\end{array}\right]\otimes\left[\begin{array}{ccc} v_1~v_2\end{array}\right])=$
  \begin{align*}
  \left[\begin{array}{ccc}
(-1)^{(j-1)(\iota+1)}\xi^{ij}v_1\otimes v_1&(-1)^{(j-1)\iota}\xi^{(i+2)j}v_2\otimes v_1+b_{12}v_1\otimes v_2\\
(-1)^{(\iota+1)j}\xi^{ij}v_1\otimes v_2
  &(-1)^{j\iota}\xi^{(i+2)j}v_2\otimes v_2+b_{11}v_1\otimes v_1
         \end{array}\right].
  \end{align*}
\end{enumerate}
\end{lem}
\begin{proof}
If $k=0$, then by  Lemma \ref{lemtwoSimpleobject-24} and the formula \eqref{equbraidingYDcat}, we have
\begin{align*}
c(v_1\otimes v_1)&=a^j\cdot v_1\otimes v_1+(1-\xi^2)^{-\frac{1}{2}}x_2ba^{j-1}\cdot v_1\otimes v_2=(-1)^{(\iota+1)j}\xi^{ij}v_1\otimes v_1,\\
c(v_1\otimes v_2)&=(-1)^{j\iota}\xi^{(i+2)j}v_2\otimes v_1+a_{12}v_1\otimes v_2,\\
c(v_2\otimes v_1)&=da^{j-1}\cdot v_1\otimes v_2+(1-\xi^2)^{-\frac{1}{2}}x_1ca^{j-1}\cdot v_1\otimes v_1\\
&=(-1)^{(\iota+1)(j-1)}\xi^{ij}v_1\otimes v_2,\\
c(v_2\otimes v_2)&=(-1)^{(j-1)\iota}\xi^{(i+2)j}v_2\otimes v_2+a_{11}v_1\otimes v_1.
\end{align*}
Similarly, we can obtain the matrix of  the braiding associated with $V_{i,j,1,\iota}$.
\end{proof}

\section{Nichols algebras   in ${}_{\cK_{24,1}}^{\cK_{24,1}}\mathcal{YD}$}\label{secKnichols}
We determine all finite-dimensional Nichols algebras over simple objects in ${}_{\cK_{24,1}}^{\cK_{24,1}}\mathcal{YD}$. Let
\begin{gather*}
\Lambda^0:=\{(i,j,k)\in \I_{0,1}\times \I_{0,1}\times \I_{0,5}\mid  i(j+k)+j \equiv 1\mod 2 \}.
\end{gather*}
We shall show  that finite-dimensional Nichols algebras over one-dimensional objects in ${}_{\cK_{24,1}}^{\cK_{24,1}}\mathcal{YD}$ are parametrized by $\Lambda^0$.
\begin{pro}\label{proNicholsoneH-24}
Let $\K_{\chi_{i,j,k}}=\K\{v\}$ for $(i,j,k)\in \I_{0,1}\times \I_{0,1}\times \I_{0,5}$. Then
\begin{align*}
\BN(\K_{\chi_{i,j,k}})=\begin{cases}
\bigwedge \K_{\chi_{i,j,k}}, & (i,j,k)\in\Lambda^0;\\
\K[v], & \text{others}.
\end{cases}
\end{align*}
\end{pro}
\begin{proof}
It follows directly by Lemma \ref{lembraidingone-24} and Remark \ref{rmkN-infity}.
\end{proof}

Let $V_{i,j,k,\iota}=\K\{v_1,v_2\}\in{}_{\cK_{24,1}}^{\cK_{24,1}}\mathcal{YD}$ for $(i,j,k,\iota)\in\Lambda$. Then using the equivalence ${}_{\cK_{24,1}}^{\cK_{24,1}}\mathcal{YD}\cong{}_{\gr\A_{24,1}}^{\gr\A_{24,1}}\mathcal{YD}$,   we have $V_{i,j,k,\iota}\in{}_{\gr\A_{24,1}}^{\gr\A_{24,1}}\mathcal{YD}$, that is, $V_{i,j,k,\iota}\in{}_{\BN(X)\sharp \K[\Gamma]}^{\BN(X)\sharp \K[\Gamma]}\mathcal{YD}$ with the structure given by Corollary \ref{rmkVA2-24}.

By Proposition \ref{pro-HS13-8.8} (see also \cite[Theorem 1.1]{AA18}), $\BN(V_{i,j,k,\iota})\sharp\BN(X)\cong\BN(X\oplus X_{i,j,k,\iota})$ in ${}_{\Gamma}^{\Gamma}\mathcal{YD}$ which is the identity on $X\oplus X_{i,j,k,\iota}$, where $X_{i,j,k,\iota}=\K\{v_1\}\in{}_{\Gamma}^{\Gamma}\mathcal{YD}$ with
\begin{gather*}
g\cdot v_1=\xi^{-j}v_1,\quad h\cdot v_1=(-1)^kv_1,\quad \delta(v_1)=g^{-3(\iota-1)-i}h^{\iota-1}\otimes v_1.
\end{gather*}
It is clear that $\BN(X\oplus X_{i,j,k,\iota})$ is of diagonal type with the generalized Dynkin diagram  given by
\xymatrix@C+45pt{\overset{-1 }{\underset{x  }{\circ}}\ar
@{-}[r]^{(-1)^{k+\iota-1}\xi^{-j}\quad\quad}
& \overset{(-)^{(k+j)(\ell-1)}\xi^{ij}}{\underset{v_1 }{\circ}}}.

Now we show that infinite-dimensional Nichols algebras over two-dimensional simple objects in ${}_{\cK_{24,1}}^{\cK_{24,1}}\mathcal{YD}$ are parametrized by the following subsets:
\begin{align*}
\Lambda^{0\ast}:&=\{(i,j,k,\iota)\in\Lambda\mid 3(k+j)(\iota-1)+ij \text{ or }\\&\quad\quad 3(k+\iota)+3(k+j)(\iota-1)+(i-1)j\equiv 0\mod 6\};\\
\Lambda^{0\ast\ast}:&=\{(i,j,k,\iota)\in\Lambda\mid i=5, j\in\{1,5\},  k+\iota+1\equiv 0\mod 2\}.
\end{align*}
and present finite-dimensional ones by generators and relations.
\begin{lem}\label{lemH-infity-24}
Let $(i,j,k,\iota)\in\Lambda^{0\ast}\cup\Lambda^{0\ast\ast}$. Then $\dim\BN(V_{i,j,k,\iota})=\infty$.
\end{lem}
\begin{proof}
It suffices to show that $\dim\BN(X\oplus X_{i,j,k,\iota})=\infty$.
If $(i,j,k,\iota)\in\Lambda^{0\ast}$, then the Dynkin diagram is $\xymatrix@C+45pt{\overset{-1 }{\underset{x  }{\circ}}\ar
@{-}[r]^{(-1)^{k+\iota-1}\xi^{-j}\quad\quad}
& \overset{(-)^{(k+j)(\iota-1)}\xi^{ij}}{\underset{v_1 }{\circ}}}$.

If $(i,j,k,\iota)\in\Lambda^{0\ast\ast}$, then the Dynkin diagram of $X\oplus X_{i,j,k,\iota}$ is given by
\xymatrix@C+15pt{\overset{-1 }{\underset{x  }{\circ}}\ar
@{-}[r]^{\xi^{-j}}
& \overset{\xi^{-j}}{\underset{v_1 }{\circ}}}. These diagrams do not appear in \cite[Table 1]{H09}, that is, they have infinite root systems. Therefore,  $\dim\BN(X\oplus X_{i,j,k,\iota})=\infty$.
\end{proof}

\begin{pro}\label{proV1-24}
Let $\Lambda^1=\{(i,j,k,\iota)\in\Lambda\mid 3(k+\iota-1)-j\equiv\pm 1\mod 6, 3(k+j)(\iota-1)+ij\equiv 3\pm 1\mod 6\}$. The Nichols algebra $\BN(V_{i,j,k,\iota})$ for $(i,j,k,\iota)\in\Lambda^1$ is generated by $v_1,v_2$, subject to the relations
   \begin{gather}
   v_1^3=0,\quad\xi^{2j}v_1^2v_2+(-1)^{\iota}\xi^{-2j}v_1v_2v_1+v_2v_1^2=0, \label{eqR1-1}\\
   v_1v_2^2+(-1)^{\iota}v_2v_1v_2+v_2^2v_1=0,\label{eqR1-2}\\
    \frac{(-1)^{\iota}(1-\xi^{2j})\xi^4}{1+\xi^5}v_1^2v_2+\frac{(1-\xi^{-2j})\xi^4}{1+\xi^5}v_1v_2v_1+v_2^3=0.\label{eqR1-3}
   \end{gather}
\end{pro}
\begin{proof}
By Lemma \ref{lembraidsimpletwo-24}, the braiding of $V_{i,j,k,\iota}$ is given by
\begin{align*}
  c(\left[\begin{array}{ccc} v_1\\v_2\end{array}\right]\otimes\left[\begin{array}{ccc} v_1~v_2\end{array}\right])=
  \left[\begin{array}{ccc}
\xi^{2j} v_1\otimes v_1& (-1)^{\iota}\xi^{-2j} v_2\otimes v_1- v_1\otimes v_2\\
(-1)^{\iota+1}\xi^{2j} v_1\otimes v_2 & \xi^{-2j}v_2\otimes v_2+A v_1\otimes v_1
  \end{array}\right],
\end{align*}
where $A=\frac{[(-1)^{\iota}\xi^{-2j}+(-1)^{\iota+1}\xi^{2j}]\xi^4}{1+\xi^5}$. Since $c(v_1\otimes v_1)=\xi^{2j} v_1\otimes v_1$, it follows by the formulae \eqref{eqSmash} and \eqref{eqSkew-1} that
\begin{align*}
\partial_1(v_1^3)&=(v^1\otimes \text{id}^{\otimes 2})\Delta^{1,2}(v_1^3)=(1+\xi^{2j}+\xi^{4j})v_1^2=0,\\
\partial_2(v_1^3)&=(v^2\otimes \text{id}^{\otimes 2})\Delta^{1,2}(v_1^3)=0.
\end{align*}
Similarly, we obtain that
\begin{align*}
\partial_1(v_1^2v_2)&=v_1v_2+(-1)^{\iota+1}\xi^{-2j}v_2v_1,\ \partial_2(v_1^2v_2)=\xi^{2j}v_1^2;\\
\partial_1(v_1v_2v_1)&=(-1)^{\iota+1}\xi^{-2j}v_1v_2,\quad  \partial_2(v_1v_2v_1)=(-1)^{\iota}\xi^{-2j}v_1^2;\\
\partial_1(v_2v_1^2)&=(-1)^{\iota}v_2v_1,\quad  \partial_2(v_2v_1^2)=v_1^2;\\
\partial_1(v_2^3)&=(1+\xi^{4j})Av_1v_2+(-1)^{\iota+1}\xi^{2j}Av_2v_1,\quad\partial_2(v_2^3)=0;\\
\partial_1(v_1v_2^2)&=A\xi^{2j}v_1^2-\xi^{4j}v_2^2,\quad \partial_2(v_1v_2^2)=(-)^{\iota+1}v_1v_2;\\
\partial_1(v_2^2v_1)&=Av_1^2+\xi^{4j}v_2^2,\quad \partial_2(v_2^2v_1)=-\xi^{2j}v_2v_1;\\
\partial_1(v_2v_1v_2)&=(-1)^{\iota}\xi^{-2j}Av_1^2,\quad\partial_2(v_2v_1v_2)=v_1v_2+(-1)^{\iota}\xi^{2j}v_2v_1.
\end{align*}

It is easy to verify that $\partial_1(r)=0=\partial_2(r)$ for any relation $r$ given in \eqref{eqR1-1}--\eqref{eqR1-3}. Then by \eqref{Def-Nichols-IV}, the quotient $\mathfrak{B}$ of $T(V_{i,j,k,\iota})$ by the relations \eqref{eqR1-1}--\eqref{eqR1-3} projects onto $\BN(V_{i,j,k,\iota})$.
 We claim that $I=\K\{v_1^i(v_2v_1)^jv_2^k,\ i,k\in\I_{0,2},j\in\I_{0,1}\}$ is a left ideal. Indeed, from \eqref{eqR1-1}--\eqref{eqR1-3} and $(v_2v_1)^2=(-1)^{\iota}v_1^2v_2^2$, we have $v_1I,v_2I\subset I$. Hence $I$ linearly  generates $\mathfrak{B}$ since  $1\in I$.

 We claim that $\dim\BN(V_{i,j,k,\iota})\geq 18=|I|$. Indeed, the Dynkin diagram of $X\oplus X_{i,j,k,\iota}$ is \xymatrix@C+25pt{\overset{-1 }{{\circ}}\ar
@{-}[r]^{-\xi^{2j}}
& \overset{\xi^{2j}}{{\circ}}}. Since $j\notin\{0,3\}$, it is of  standard type $B_2$.  By \cite{An13,An15},  $\dim\BN(X\oplus X_{i,j,k,\iota})=36$. The claim follows since $\dim\BN(V_{i,j,k,\iota})\geq\frac{1}{2}\dim\BN(X\oplus X_{i,j,k,\iota})$. Consequently, $\BN\cong\BN(V_{i,j,k,\iota})$.
\end{proof}

\begin{pro}\label{pro-2-24}
 Let $\Lambda^2=\{(i,j,k,\iota)\in\Lambda\mid 3(k+\iota-1)-j\equiv\pm 2\mod 6, 3(k+j)(\iota-1)+ij\equiv \pm 2\mod 6\}$.
   The Nichols algebra $\BN(V_{i,j,k,\iota})$ for $(i,j,k,\iota)\in\Lambda^2$ is generated by $v_1,v_2$, subject to the relations
   \begin{gather}
   v_1^3=0,\quad \xi^{2j}v_1^2v_2+(-1)^k\xi^jv_1v_2v_1+v_2v_1^2=0,\label{eqR2-1}\\
   v_2^2v_1+[(-1)^{\iota}\xi^{2j}+(-1)^k\xi^j]v_2v_1v_2-v_1v_2^2=0,\quad v_2^6=0.\label{eqR2-2}
   \end{gather}
\end{pro}
\begin{proof}
The braiding  of $V_{i,j,k,\iota}$ is given by $ c(\left[\begin{array}{ccc} v_1\\v_2\end{array}\right]\otimes\left[\begin{array}{ccc} v_1~v_2\end{array}\right])=$
\begin{align*}
 \left[\begin{array}{ccc}
\xi^{2j} v_1\otimes v_1& (-1)^k\xi^j v_2\otimes v_1+[\xi^{2j}+(-1)^{k+\iota}\xi^j] v_1\otimes v_2\\
(-1)^{\iota+1}\xi^{2j} v_1\otimes v_2 & (-1)^{k+\iota}\xi^j v_2\otimes v_2-(-1)^{\iota}\frac{\xi}{1-\xi^2} v_1\otimes v_1
  \end{array}\right].
  \end{align*}

   Then a direct computation shows that the relations \eqref{eqR2-1} and \eqref{eqR2-2} are zero in $\BN(V_{i,j,k,\iota})$ being annihilated by $\partial_1,\partial_2$ and hence the quotient $\mathfrak{B}$ of $T(V_{i,j,k,\iota})$ by the relations \eqref{eqR2-1} and \eqref{eqR2-2} projects onto $\BN(V_{i,j,k,\iota})$. We claim that $I=\K\{v_1^i(v_2v_1)^jv_2^k,i\in\I_{0,2},j\in\I_{0,1},k\in\I_{0,5}\}$ is a left ideal. Indeed, from \eqref{eqR2-1}, \eqref{eqR2-2} and $(v_2v_1)^2=-[(-1)^{\iota}\xi^{2j}+(-1)^k\xi^j]v_1^2v_2^2-2(v_1v_2)^2$, it is easy to show that $v_1I,v_2I\subset I$. Hence $I$ linearly  generates $\mathfrak{B}$ since  $1\in I$.

 We claim that $\dim\BN(V_{i,j,k,\iota})\geq 36=|I|$. Indeed,
  the Dynkin diagram of $X\oplus X_{i,j,k,\iota}$ is \xymatrix@C+25pt{\overset{-1 }{{\circ}}\ar
@{-}[r]^{\xi^{2j}}
& \overset{\xi^{2j}}{{\circ}}}. Since $j\notin\{0,3\}$, it is of standard type $B_2$ and by \cite{An13,An15},  $\dim\BN(X\oplus X_{i,j,k,\iota})=72$. The claim follows since $\dim\BN(V_{i,j,k,\iota})\geq\frac{1}{2}\dim\BN(X\oplus X_{i,j,k,\iota})$. Consequently, $\BN\cong\BN(V_{i,j,k,\iota})$.
\end{proof}

\begin{pro}\label{pro-3-24}
Let $\Lambda^3=\{(i,j,k,\iota)\in\Lambda\mid 3(k+\iota-1)-j\equiv\mp 2\mod 6, 3(k+j)(\iota-1)+ij\equiv \pm 1\mod 6\}$. The Nichols algebra $\BN(V_{i,j,k,\iota})$ for $(i,j,k,\iota)\in\Lambda^3$ is generated by  $v_1,v_2$, subject to the relations
   \begin{gather}
\notag   v_1^6=0,\quad(-1)^{k+\iota}\xi^{-j}v_1^2v_2+[(-1)^{\iota}+(-1)^k\xi^{-j}]v_1v_2v_1+v_2v_1^2=0,\\
  (-)^{\iota+1}\frac{1}{3}(\xi+\xi^2)v_1^3+v_1v_2^2+(-1)^{\iota}v_2v_1v_2+v_2^2v_1=0,\label{eqR3}\\
\notag  \frac{(-1)^{\iota}\xi^{1-2j}}{1+\xi^5}v_1^2v_2+\frac{(-1)^{k+\iota}\xi^{1-j}}{1+\xi^5}v_1v_2v_1+v_2^3=0.
  \end{gather}
\end{pro}
\begin{proof}
The braiding of $V_{i,j,k,\iota}$ is given by $ c(\left[\begin{array}{ccc} v_1\\v_2\end{array}\right]\otimes\left[\begin{array}{ccc} v_1~v_2\end{array}\right])=$
\begin{align*}
\left[\begin{array}{ccc}
(-1)^{k+\iota}\xi^{-j} v_1\otimes v_1& (-1)^{\iota}\xi^{-2j} v_2\otimes v_1+[\xi^{-2j}+(-1)^{k+\iota}\xi^{-j}] v_1\otimes v_2\\
(-1)^{k+1}\xi^{-j} v_1\otimes v_2 & \xi^{-2j}v_2\otimes v_2+(-1)^{\iota}\frac{\xi}{1-\xi^2} v_1\otimes v_1
  \end{array}\right].
  \end{align*}

  Then a direct computation shows that the relations \eqref{eqR3} are zero in $\BN(V_{i,j,k,\iota})$ being annihilated by $\partial_1,\partial_2$ and hence the quotient $\mathfrak{B}$ of $T(V_{i,j,k,\iota})$ by the relations \eqref{eqR3} projects onto $\BN(V_{i,j,k,\iota})$. We claim that $I=\K\{v_1^i(v_2v_1)^jv_2^k,i\in\I_{0,5},j\in\I_{0,1},k\in\I_{0,2}\}$ is a left ideal. Indeed, from \eqref{eqR3} and $(v_2v_1)^2=-(\xi^{2j}+\xi^{2j+1})v_1^2v_2^2-2\xi^{2j}(v_1v_2)^2$, it is easy to show that $v_1I,v_2I\subset I$. Hence $I$ linearly  generates $\mathfrak{B}$ since clearly $1\in I$.

 We claim that $\dim\BN(V_{i,j,k,\iota})\geq 36=|I|$. Indeed,
  the generalized Dynkin diagram of $\BN(X\oplus X_{i,j,k,\iota})$ is \xymatrix@C+45pt{\overset{-1 }{{\circ}}\ar
@{-}[r]^{(-1)^{k+\iota+1}\xi^{-j}}
& \overset{(-1)^{k+\iota}\xi^{-j}}{{\circ}}}. Since $j\notin\{0,3\}$, it is of  standard type $B_2$. By \cite{An13,An15}, $\dim\BN(X\oplus X_{i,j,k,\iota})=72$. The claim follows since $\dim\BN(V_{i,j,k,\iota})\geq\frac{1}{2}\dim\BN(X\oplus X_{i,j,k,\iota})$. Consequently, $\BN\cong\BN(V_{i,j,k,\iota})$.
\end{proof}
%
%
\begin{pro}\label{pro-456-24}
\begin{enumerate}
\item Let $(i,j,k,\iota)\in\Lambda^4=\{(i,3,0,1),(i,3,1,0),i\in\{1,3,5\}\}$. The Nichols algebra $\BN(V_{i,j,k,\iota})$  is generated as an algebra by $v_1,v_2$, subject to the relations
\begin{align}
v_1^2=0,\quad v_1v_2+(-1)^{\iota}v_2v_1=0,\quad v_2^2=0.\label{eqR4}
\end{align}
\item Let $\Lambda^5=\{(i,j,k,\iota)\in\Lambda-\Lambda^4\mid 3(k+j)(\iota-1)+ij\equiv 3\mod 6\}$.
The Nichols algebra $\BN(V_{i,j,k,\iota})$ for $(i,j,k,\iota)\in\Lambda^5$ is generated by $v_1, v_2$, subject to the relations
\begin{align*}
v_1^2=0,\  v_1v_2+(-1)^k\xi^{5j}v_2v_1=0,\  v_2^N=0,\  N=\ord((-1)^{k+\iota-1}\xi^{-j})\in\{3,6\}.
\end{align*}

\item Let $\Lambda^6=\{(i,j,k,\iota)\in\Lambda-\Lambda^4\mid 3(k+\iota)+3(k+j)(\iota-1)+(i-1)j\equiv 3\mod 6 \}$. The Nichols algebra $\BN(V_{i,j,k,\iota})$ for $(i,j,k,\iota)\in\Lambda^6$ is generated by $v_1,v_2$, subject to the relations
    \begin{gather*}
    v_1v_2+(-1)^{\iota}v_2v_1=0,\; v_2^2+(1-\xi^2)^{-1}\xi^{2+4i}(-1)^{\iota}v_1^2=0,\; v_1^N=0,
    \end{gather*}
    where $ N=\ord((-1)^{\iota+1+k}\xi^j)\in\{3,6\}.$
\end{enumerate}
\end{pro}
\begin{proof}
Assume that $(i,j,k,\iota)\in\Lambda^4$. The braiding of $V_{i,j,k,\iota}=\{v_1,v_2\}$ is given by
  \begin{align*}
  c(\left[\begin{array}{ccc} v_1\\v_2\end{array}\right]\otimes\left[\begin{array}{ccc} v_1~v_2\end{array}\right])=
  \left[\begin{array}{ccc}
- v_1\otimes v_1& (-1)^{\iota-1}v_2\otimes v_1-2v_1\otimes v_2\\
(-1)^{\iota} v_1\otimes v_2 &-v_2\otimes v_2
  \end{array}\right].
  \end{align*}
Then a direct computation shows that $v_1^2,v_2^2,v_1v_2+(-1)^{\iota}v_2v_1\in\Pp(T(V_{i,j,k,\iota}))$. Indeed, we have
  \begin{gather*}
  \Delta(v_i^2)=v_i^2\otimes 1+v_i\otimes v_i+c(v_i\otimes v_i)+1\otimes v_i^2=v_i^2\otimes 1+1\otimes v_i^2;\quad i\in\I_{1,2},\\
  \Delta(v_1v_2+(-1)^{\iota}v_2v_1)=(v_1v_2+(-1)^{\iota}v_2v_1)\otimes 1+1\otimes (v_1v_2+(-1)^{\iota}v_2v_1).
  \end{gather*}
Therefore, the quotient $\mathfrak{B}$ of $T(V_{i,j,k,\iota})$ by the relations \eqref{eqR4} projects onto $\BN(V_{i,j,k,\iota})$.  From \eqref{eqR3}, it is easy to show that  $I=\K\{v_1^iv_2^j,i,j\in\I_{0,1}\}$ is a left ideal and linearly  generates $\mathfrak{B}$.

 We claim that $\dim\BN(V_{i,j,k,\iota})\geq 4=|I|$. Indeed,
  the diagram is \xymatrix@C+15pt{\overset{-1 }{\underset{x  }{\circ}}\ar
@{-}[r]^{-1}
& \overset{-1}{\underset{v_1 }{\circ}}}. It is of Cartan type $A_2$.  By \cite{An13,An15}, $\dim\BN(X\oplus X_{i,j,k,\iota})=8$. The claim follows since $\dim\BN(V_{i,j,k,\iota})=\frac{1}{2}\dim\BN(X\oplus X_{i,j,k,\iota})$. Consequently, $\BN\cong\BN(V_{i,j,k,\iota})$.

The proof follows for $(i,j,k,\iota)\in\Lambda^5$ or $\Lambda^6$ the same lines as for $(i,j,k,\iota)\in\Lambda^4$. In these cases, the generalized Dynkin diagram of $X\oplus X_{i,j,k,\iota}$ is given by
$$
\xymatrix@C+35pt{\overset{-1 }{\underset{x  }{\circ}}\ar
@{-}[r]^{(-1)^{k+\iota-1}\xi^{-j}}
& \overset{-1}{\underset{v_1 }{\circ}}} or \xymatrix@C+45pt{\overset{-1 }{\underset{x  }{\circ}}\ar@{-}[r]^{(-1)^{\iota+1+k}\xi^{-j}}& \overset{(-1)^{\iota+1+k}\xi^{j}}{\underset{v_1 }{\circ}}}.
$$
They are of standard $A_2$ type.
\end{proof}
\begin{thm}\label{thmFDNichols-24}
Let $V$ be a simple object in ${}_{\cK_{24,1}}^{\cK_{24,1}}\mathcal{YD}$ such that $\dim\BN(V)<\infty$. Then $V$ is isomorphic either to $\K_{\chi_{i,j,k}}$ for $(i,j,k)\in\Lambda^0$ or to $V_{i,j,k,\iota}$ for $(i,j,k,\iota)\in\cup_{i=1}^6\Lambda^i$.
\end{thm}
\begin{proof}
By Theorem \ref{thmsimplemoduleD-24}, $V$ is isomorphic to $\K_{\chi_{i,j,k}}$ for $(i,j,k)\in \I_{0,1}\times \I_{0,1}\times \I_{0,5}$ or $V_{i,j,k,\iota}$ for $(i,j,k,\iota)\in\Lambda$. If $\dim V=1$, by Proposition \ref{proNicholsoneH-24}, $V\cong \K_{\chi_{i,j,k}}$ for $(i,j,k)\in\Lambda^0$. Observe that $\Lambda=\Lambda^{0\ast}\cup\Lambda^{0\ast\ast}\cup\cup_{i=1}^6\Lambda^i$. If $\dim V=2$, then by Propositions \ref{proV1-24}---\ref{pro-456-24}, $V\cong V_{i,j,k,\iota}$ for $(i,j,k,\iota)\in\cup_{i=1}^6\Lambda^i$.
\end{proof}
\begin{rmk}
\begin{enumerate}
\item $|\Lambda^1|=12$, $|\Lambda^2|=8=|\Lambda^3|$, $|\Lambda^4|=6$, $|\Lambda^5|=12=|\Lambda^6|$.
\item The Nichols algebras $\BN(V_{i,j,k,\iota})$ with $(i,j,k,\iota)\in\Lambda^4\cup\Lambda^5\cup\Lambda^6$
    have already appeared in \cite{AGi17}. They are isomorphic to quantum planes as algebras. They can be recovered, up to isomorphism,  by using the techniques in \cite{AA18}. Indeed, from the proofs of Proposition \ref{pro-456-24}, they are arising from Nichols algebras of Cartan type $A_2$ or standard type $A_2$.

\item  The Nichols algebra $\BN(V_{i,j,k,\iota})$ for $(i,j,k,\iota)\in\cup_{i=1}^3\Lambda^i$ is an algebra of dimension $18$ or $36$ with no quadratic relations.  They appeared in \cite{X17}(see also \cite{HX18b,MBGG}) with different parameters. Indeed, from the proofs of Propositions \ref{proV1-24}---\ref{pro-456-24}, \cite[Remark 4.21]{X17} and \cite[Proposition 5.9]{MBGG}, up to isomorphism, they are arising from Nichols algebras of standard type $B_2$ whose Dynkin diagram is \xymatrix@C+15pt{\overset{-1 }{{\circ}}\ar
@{-}[r]^{q^{-2}} & \overset{q}{{\circ}}} $( q\in\K^{\times}-\{1,-1\}$ and $q^4\neq 1)$, with different matrices of the braiding.
\end{enumerate}
\end{rmk}

\section{Hopf algebras over $\cK_{24,1}$}\label{secK-lifting}
We determine all finite-dimensional Hopf algebras over $\cK_{24,1}$, whose infinitesimal braidings are simple objects in ${}_{\cK_{24,1}}^{\cK_{24,1}}\mathcal{YD}$. We first define four families of Hopf algebras $\mathcal{C}_{i,j,k,\iota}(\mu)$ for $(i,j,k,\iota)\in\Lambda^{1\ast}$ and show that they are indeed the liftings of $\BN(V_{i,j,k,\iota})\sharp \cK_{24,1}$. Here $\Lambda^{1\ast}=\{(2,j_1,0,0),(2,j_2,1,0),j_1=2,4,j_2=1,5\}\subset\Lambda^1$.
\begin{defi}
For $j\in\{2,4\}$ and $\mu\in\K$, let $\mathcal{C}_{2,j,0,0}(\mu)$  be the algebra generated by   $a,b,c,d,v_1, v_2$, subject to the relations $\eqref{defiH-1}$ and the following ones:
\begin{gather}
 av_1=\xi^5v_1a,\quad av_2=\xi^4v_2a+v_1c, \quad bv_1=\xi^5v_1b,\quad bv_2=\xi^4v_2b+ v_1d, \label{eq-C2j-1} \\
       cv_1=\xi^2v_1c,\quad cv_2=\xi^{4}v_2c+ v_1a,\quad dv_1=\xi^2v_1d,\quad dv_2=\xi^{4}v_2d+ v_1b,\label{eq-C2j-2}\\
 v_1^3=0, \quad\xi^{2j}v_1^2v_2+\xi^{-2j} v_1v_2v_1+v_2v_1^2=0, \label{eq-C2j-3}\\
\frac{(1-\xi^{2j})\xi^4}{1+\xi^5}v_1^2v_2+\frac{(1-\xi^{4j})\xi^4}{1+\xi^5}v_1v_2v_1+v_2^3=\mu(1-da^{-1}),\\
v_1v_2^2+v_2v_1v_2+v_2^2v_1=-2\mu\xi^4ba^{-1}.
\end{gather}
\end{defi}
$\mathcal{C}_{2,j,0,0}(\mu)$ admits a Hopf algebra structure, where the comultiplication is given by  \eqref{defiH-2-24} and
\begin{align}
\begin{split}\label{eqCoproduct-H-1}
  \Delta(v_1)&=v_1\otimes 1+a^j\otimes v_1-\xi(1+\xi^j)ba^{j-1}\otimes v_2,\\
  \Delta(v_2)&=v_2\otimes 1+da^{j-1}\otimes v_2-(1-\xi^2)^{-1}\xi^{-1}(1-\xi^j)ca^{j-1}\otimes v_1.
\end{split}
\end{align}

\begin{rmk}
It is clear that $\mathcal{C}_{2,j,0,0}(0)\cong\BN(V_{2,j,0,0})\sharp \cK_{24,1}$ and $\mathcal{C}_{2,j,0,0}(\mu)$ with $\mu\neq 0$ is not isomorphic to $\mathcal{C}_{2,j,0,0}(0)$ for $j\in\{2,4\}$.
\end{rmk}

\begin{defi}
For $j\in\{1,5\}$ and $\mu\in\K$, let $\mathcal{C}_{2,j,1,0}(\mu)$  be the algebra generated by  $a,d,c,d,v_1,v_2$, subject to the relations  \eqref{defiH-1}, \eqref{eq-C2j-1}-\eqref{eq-C2j-3}  and the following ones:
\begin{gather}
\frac{(1-\xi^{2j})\xi^4}{1+\xi^5}v_1^2v_2+\frac{(1-\xi^{4j})\xi^4}{1+\xi^5}v_1v_2v_1+v_2^3=\mu(1-a^3),\\
v_1v_2^2+v_2v_1v_2+v_2^2v_1=-2\mu\xi^4ca^2.
\end{gather}
\end{defi}
$\mathcal{C}_{2,j,1,0}(\mu)$ admits a Hopf algebra structure, where the comultiplication is given by \eqref{defiH-2-24} and
\begin{align}
\begin{split}\label{eqCoproduct-H-2}
\Delta(v_1)&=v_1\otimes 1+da^{j-1}\otimes v_1-\xi(1-\xi^j)ca^{j-1}\otimes v_2,\\
\Delta(v_2)&=v_2\otimes 1+a^{j}\otimes v_2-(1-\xi^2)^{-1}\xi^{-1}(1+\xi^j)ba^{j-1}\otimes v_1.
\end{split}
\end{align}

\begin{rmk}
It is clear that $\mathcal{C}_{2,j,1,0}(0)\cong\BN(V_{2,j,1,0})\sharp \cK_{24,1}$ and $\mathcal{C}_{2,j,1,0}(\mu)$ with $\mu\neq 0$ is not isomorphic to $\mathcal{C}_{2,j,1,0}(0)$ for $j\in\{1,5\}$.
\end{rmk}

\begin{lem}\label{lemDimA-24-1}
A linear basis of $\mathcal{C}_{i,j,k,\iota}(\mu)$ for $(i,j,k,\iota)\in\Lambda^{1\ast}$ is given by
\begin{gather*}
 \{v_2^i(v_1v_2)^jv_1^kd^{\mu}c^{\nu}b^la^m,~i,k\in\I_{0,2},~j,\mu,\nu,l,\mu+\nu+l\in\I_{0,1},~m\in\I_{0,5}\}.
\end{gather*}
\end{lem}
\begin{proof}
 We prove the assertion for $\mathcal{C}_{2,j,1,0}(\mu)$, being the proof for $\mathcal{C}_{2,j,0,0}(\mu)$ completely analogous.  We write $v_{12}:=v_1v_2$ for short. By Diamond Lemma, it suffices to show that all overlaps ambiguities are resolvable, that is, the ambiguities can be reduced to the same expression by different substitution rules with the order $v_2<v_1v_2<v_1<d<c<b<a$. Here we verify that the overlapping pair  $(fv_2)v_2^2=f(v_2^3)$ for $f\in\{a,\;b,\;c,\;d\}$ is resolvable:
\begin{align*}
(av_2)v_2^2&=(\xi^4v_2a+v_1c)v_2^2
=\xi^2v_2^2av_2+\xi^4(v_1v_2+v_2v_1)cv_2+v_1^2av_2\\
&=\xi^2v_2^2(\xi^4v_2a+v_1c)+\xi^4(v_1v_2+v_2v_1)(\xi^4v_2c+v_1a)+v_1^2(\xi^4v_2a+v_1c)\\
&=v_2^3a+\xi^2v_2^2v_1c+\xi^2v_1v_2^2c+\xi^4v_1v_2v_1a+\xi^2v_2v_1v_2c+\xi^4v_2v_1^2a\\&\quad+\xi^4v_1^2v_2a+v_1^3c\\
&=v_2^3a+\xi^4v_1v_2v_1a+\xi^4v_2v_1^2a+\xi^4v_1^2v_2a+\xi^2(v_2^2v_1+v_1v_2^2+v_2v_1v_2)c\\
&=v_2^3a+\xi^4v_1v_2v_1a+\xi^4v_2v_1^2a+\xi^4v_1^2v_2a\\
&=\frac{(1-\xi^{2j})\xi}{1+\xi^5}av_1^2v_2+\frac{(1-\xi^{4j})\xi}{1+\xi^5}av_1v_2v_1+\mu a(1-a^3)=a(v_2^3).
\end{align*}
\vspace{-0.4cm}
\begin{align*}
(bv_2)v_2^2&=(\xi^4v_2b+ v_1d)v_2^2
=\xi^2v_2^2bv_2+\xi^4v_2v_1dv_2+\xi^4v_1v_2dv_2+v_1^2bv_2\\
&=v_2^3b+\xi^2v_2^2v_1d+\xi^2v_2v_1v_2d+\xi^4v_2v_1^2b+\xi^2v_1v_2^2d+\xi^4v_1v_2v_1b\\&\quad+\xi^4v_1^2v_2b+v_1^3d\\
&=v_2^3b+\xi^4v_1v_2v_1b+\xi^4v_2v_1^2b+\xi^4v_1^2v_2b+\xi^2(v_2^2v_1+v_1v_2^2+v_2v_1v_2)d\\
&=v_2^3b+\xi^4v_1v_2v_1b+\xi^4v_2v_1^2b+\xi^4v_1^2v_2b-2\mu cda^2\\
&=\frac{(1-\xi^{2j})\xi}{1+\xi^5}bv_1^2v_2+\frac{(1-\xi^{4j})\xi}{1+\xi^5}bv_1v_2v_1+\mu b(1-a^3)=b(v_2^3).
\end{align*}
\vspace{-0.4cm}
\begin{align*}
(dv_2)v_2^2&=(\xi^{4}v_2d+ v_1b)v_2^2
=\xi^2v_2^2dv_2+\xi^4(v_1v_2+v_2v_1)bv_2+v_1^2dv_2\\
&=v_2^3d+\xi^2v_2^2v_1b+\xi^2v_1v_2^2b+\xi^4v_1v_2v_1d+\xi^2v_2v_1v_2b+\xi^4v_2v_1^2d\\&\quad+\xi^4v_1^2v_2d+v_1^3b\\
&=v_2^3d+\xi^4v_1v_2v_1d+\xi^4v_2v_1^2d+\xi^4v_1^2v_2d+\xi^2(v_2^2v_1+v_1v_2^2+v_2v_1v_2)b+v_1^3b\\
&=\frac{(1-\xi^{2j})\xi}{1+\xi^5}dv_1^2v_2+\frac{(1-\xi^{4j})\xi}{1+\xi^5}dv_1v_2v_1+\mu d(1-a^3)=d(v_2^3).\\
(cv_2)v_2^2&=(\xi^4v_2c+ v_1a)v_2^2\\
&=v_2^3c+\xi^4v_2v_1^2c+\xi^4v_1v_2v_1c+\xi^4v_1^2v_2c+\xi^2(v_2^2v_1+v_2v_1v_2+v_1v_2^2)a+v_1^3a\\
&=\frac{(1-\xi^{2j})\xi}{1+\xi^5}cv_1^2v_2+\frac{(1-\xi^{4j})\xi}{1+\xi^5}cv_1v_2v_1+\mu c(1-a^3)=c(v_2^3).
\end{align*}
One can also show that the remaining overlaps
\begin{gather*}
\{(fv_1)v_1^2,  \quad
\{(fv_{12})v_{12},f(v_{12}^2)\}, \quad
\{(v_1v_{12})v_{12}, v_1(v_{12}^2)\},\quad \{(v_1^3)v_{12}, v_1^2(v_1v_{12})\},\\ \{(v_1^3)v_2,v_1^2(v_1v_2)\},\quad
\{(v_1(v_2^3),(v_1v_2)v_2^2\} \quad \{v_{12}(v_2^3), (v_{12}v_2)v_2^2\},\\
\{(v_{12}^2)v_2, v_{12}(v_{12}v_2)\},\quad \{v_i^3v_i^r,v_i^rv_i^3\},\quad \{v_{12}^2v_{12}^h,v_{12}^hv_{12}^2\},
\end{gather*}
are resolvable by using the defining relations.  Here we omit the details since it is tedious but straightforward.
\end{proof}

\begin{lem}\label{lemCC-H-24}
For $(i,j,k,\iota)\in\Lambda^{1\ast}$, $\gr\mathcal{C}_{i,j,k,\iota}(\mu)\cong \BN(V_{i,j,k,\iota})\sharp \cK_{24,1}$.
\end{lem}
\begin{proof}
Let $\mathfrak{C}_{0}$ be the Hopf subalgebra of $\mathcal{C}_{i,j,k,\iota}(\mu)$   generated by the simple subcoalgebra $\K\{a,b,c,d\}$.  By Lemma \ref{lemDimA-24-1}, $\dim \mathfrak{C}_{0}=24$. It is clear that $\mathfrak{C}_{0}\cong \cK_{24,1}$. Let $\mathfrak{C}_{n}=\mathfrak{C}_{n-1}+\cK_{24,1}\{y^i(xy)^jx^k,i+2j+k=n,i,k\in\I_{0,2},j\in\I_{0,1}\}$ for $n\in\I_{0,6}$. A direct computation shows that $\{\mathfrak{C}_{n}\}_{n\in\I_{0,6}}$ is a coalgebra filtration of $\mathcal{C}_{i,j,k,\iota}(\mu)$ and hence the coradical $(\mathcal{C}_{i,j,k,\iota}(\mu))_{0}\subset \mathfrak{C}_{0}\cong \cK_{24,1}$, which implies that $(\mathcal{C}_{i,j,k,\iota}(\mu))_{[0]}\cong \cK_{24,1}$ and $\gr\mathcal{C}_{i,j,k,\iota}(\mu)\cong R_{i,j,k,\iota}\sharp \cK_{24,1}$, where $R_{i,j,k,\iota}$ is a connected Hopf algebra in ${}_{\cK_{24,1}}^{\cK_{24,1}}\mathcal{YD}$. Since  $V_{i,j,k,\iota}\subset\Pp(R_{i,j,k,\iota})$ by  definition and $\dim R_{i,j,k,\iota}=18=\dim\BN(V_{i,j,k,\iota})$ by Lemma \ref{lemDimA-24-1}, it follows that $R_{i,j,k,\iota}\cong\BN(V_{i,j,k,\iota})$  and  consequently, $\gr\mathcal{C}_{i,j,k,\iota}(\mu)\cong \BN(V_{i,j,k,\iota})\sharp \cK_{24,1}$.
\end{proof}
\begin{pro}\label{pro-Alg-1-24}
Let $A$ be a finite-dimensional Hopf algebra over $\cK_{24,1}$ such that $\gr A\cong\BN(V_{i,j,k,\iota})\sharp \cK_{24,1}$ for $(i,j,k,\iota)\in\Lambda^{1\ast}$. Then $A\cong\mathcal{C}_{i,j,k,\iota}(\mu)$ for some $\mu\in\K$.
\end{pro}
\begin{proof}
Let $X=\xi^{2j}v_1^2v_2+\xi^{-2j} v_1v_2v_1+v_2v_1^2$, $Y=\frac{(1-\xi^{2j})\xi^4}{1+\xi^5}v_1^2v_2+\frac{(1-\xi^{4j})\xi^4}{1+\xi^5}v_1v_2v_1+v_2^3$ and $Z=v_1v_2^2+v_2v_1v_2+v_2^2v_1$ for simplicity.
Assume that $(i,j,k,\iota)=(2,j,0,0)$ for some $j\in\{2,4\}$. Note that $\gr A\cong\BN(V_{i,j,k,\iota})\sharp \cK_{24,1}$. By \eqref{eqCoproduct-H-1}, a direct computation shows that
\begin{gather*}
\Delta(v_1^3)=v_1^3\otimes 1+1\otimes v_1^3+(1-\xi^2)^{-\frac{1}{2}}x_2ba^{-1}\otimes X,\\
\Delta(X)=X\otimes 1+da^{-1}\otimes X,\quad \Delta(Y)=Y\otimes 1+da^{-1}\otimes Y,\\
\Delta(Z)=Z\otimes 1+1\otimes Z+(1-\xi^2)^{-\frac{1}{2}}x_1ba^{-1}\otimes X
-2\xi^j(1-\xi^2)^{-\frac{1}{2}}x_2ba^{-1}\otimes Y.
\end{gather*}
It follows that $X,Y\in\Pp_{1,da^{-1}}(A)=\Pp_{1,da^{-1}}(\cK_{24,1})=\K\{1-da^{-1},ca^{-1}\}$, that is, $X=\alpha_1(1-da^{-1})+\alpha_2ca^{-1}, Y=\beta_1(1-da^{-1})+\beta_2ca^{-1}$ for some $\alpha_1,\alpha_2,\beta_1,\beta_2\in\K$. Then
\begin{align*}
&\Delta(v_1^3+\alpha_1(1-\xi^2)^{-\frac{1}{2}}x_2ba^{-1})
=(v_1^3+\alpha_1(1-\xi^2)^{-\frac{1}{2}}x_2ba^{-1})\otimes 1+\\&\quad\quad\quad\quad 1\otimes(v_1^3+\alpha_1(1-\xi^2)^{-\frac{1}{2}}x_2ba^{-1})+(1-\xi^2)^{-\frac{1}{2}}x_2ba^{-1}\otimes\alpha_2ca^{-1}.
\end{align*}
If the relation $v_1^3=0$ admits a non-trivial deformation, then $v_1^3\in A_{[2]}$. Since $av_1^3=-v_1^3a$, $bv_1^3=-v_1^3b$, $cv_1^3=v_1^3c$ and $dv_1^3=v_1^3c$, a tedious computation on $A_{[2]}$ shows that $v_1^3=0$ must hold in $A$. Therefore, the last equation holds only if $\alpha_1=0=\alpha_2$, which implies that $X=0$ in $A$. Similarly, we have that $Y=\beta_1(1-da^{-1})$ and $Z=2\beta_1\xi^j(1-\xi^2)^{-\frac{1}{2}}x_2ba^{-1}=-2\beta_1\xi^4ba^{-1}$. Therefore, the defining relations of $\mathcal{C}_{2,j,0,0}(\beta_1)$ hold in $A$ and hence  there is a surjective Hopf algebra morphism from $\mathcal{C}_{2,j,0,0}(\beta_1)$ to $A$. By Lemma \ref{lemDimA-24-1}, $\dim A=\dim \mathcal{C}_{2,j,0,0}(\beta_1)$ and hence $A\cong \mathcal{C}_{2,j,0,0}(\beta_1)$.

Assume that $(i,j,k,\iota)=(2,j,1,0)$ for some $j\in\{1,5\}$. By \eqref{eqCoproduct-H-2}, a direct computation shows that
\begin{gather*}
\Delta(v_1^3)=v_1^3\otimes 1+da^2\otimes v_1^3+(1-\xi^2)^{-\frac{1}{2}}x_2ca^2\otimes X,\\
\Delta(X)=X\otimes 1+a^3\otimes X,\quad
\Delta(Y)=Y\otimes 1+a^3\otimes Y,\\
\Delta(Z)=Z\otimes 1+da^2\otimes Z+(1-\xi^2)^{-\frac{1}{2}}x_1ca^2\otimes X
+2\xi^j(1-\xi^2)^{-\frac{1}{2}}x_2ca^2\otimes Y.
\end{gather*}
It follows that $X=\alpha_1(1-a^3), Y=\alpha_2(1-a^3)$ for some $\alpha_1,\alpha_2\in\K$. Then $v_1^3+\alpha_1(1-\xi^2)^{-\frac{1}{2}}x_2ca^2\in\Pp_{1,da^2}(A)=\Pp_{1,da^2}(\cK_{24,1})$, that is, $v_1^3+\alpha_1(1-\xi^2)^{-\frac{1}{2}}x_2ca^2=\alpha_3(1-da^2)$ for some $\alpha_3\in\K$. Since $av_1^3=-v_1^3a$, $bv_1^3=-v_1^3b$, $cv_1^3=v_1^3c$ and $dv_1^3=v_1^3c$, it follows that $\alpha_1=0=\alpha_3$ and hence  $v_1^3=0=X$  in $A$. Then $Z+2\alpha_2\xi^j(1-\xi^2)^{-\frac{1}{2}}x_2ca^2\in\Pp_{1,da^2}(A)=\Pp_{1,da^2}(\cK_{24,1})$, that is, $Z+2\alpha_2\xi^j(1-\xi^2)^{-\frac{1}{2}}x_2ca^2=\alpha_4(1-da^2)$ for some $\alpha_4\in\K$.
Since $aZ=\xi Za$, it follows that $\alpha_4=0$ and hence $Z=-2\alpha_2\xi^j(1-\xi^2)^{-\frac{1}{2}}x_2ca^2=-2\alpha_2\xi^4ca^2$. Therefore, there is a surjective Hopf algebra morphism from $\mathcal{C}_{2,j,1,0}(\alpha_2)$ to $A$.  Since $\dim A=\dim \mathcal{C}_{2,j,1,0}(\alpha_2)$ by Lemma \ref{lemDimA-24-1}, $A\cong \mathcal{C}_{2,j,1,0}(\alpha_2)$.
\end{proof}

\begin{pro}\label{pro-Alg-04-24}
Let $A$ be a finite-dimensional Hopf algebra over $\cK_{24,1}$ such that $\gr A\cong\BN(V)\sharp \cK_{24,1}$, where $V$ is isomorphic either to $\K_{\chi_{i,j,k}}$ for $(i,j,k)\in\Lambda^0$ or to $V_{i,j,k,\iota}$ for $(i,j,k,\iota)\in\Lambda^4$. Then $A\cong \gr A$.
\end{pro}
\begin{proof}
Assume that $V\cong\K_{\chi_{i,j,k}}$ for $(i,j,k)\in\Lambda^0$. Since $\Delta(v)=v\otimes 1+d^{j}a^{3i-j}\otimes v$, we have $\Delta(v^2)=v^2\otimes 1+1\otimes v^2$ and hence $v^2=0$ in $A$. Consequently, $A\cong\gr A$.

Assume that $V\cong V_{i,j,k,\iota}$ for $(i,j,k,\iota)\in\Lambda^4$. A direct computation shows that
\begin{gather}
\Delta(v_1^2)=v_1^2\otimes 1+1\otimes v_1^2+(-1)^{\iota}(1-\xi^2)^{-\frac{1}{2}}x_2ba^{-1}\otimes (v_1v_2+(-1)^{\iota}v_1v_2),\label{eqLH-4-1}\\
\Delta(v_2^2)=v_2^2\otimes 1+1\otimes v_2^2,\label{eqLH-4-2}\\
\Delta(v_1v_2+(-1)^{\iota}v_1v_2)=(v_1v_2+(-1)^{\iota}v_1v_2)+da^{-1}\otimes(v_1v_2+(-1)^{\iota}v_1v_2).\label{eqLH-4-last}
\end{gather}
It follows by \eqref{eqLH-4-2} that $v_2^2=0$  in $A$ and by \eqref{eqLH-4-last} that $v_1v_2+(-1)^{\iota}v_2v_1\in\Pp_{1,da^{-1}}(A)=\Pp_{1,da^{-1}}(\cK_{24,1})$, that is, $v_1v_2+(-1)^{\iota}v_2v_1=\alpha_1(1-da^{-1})+\alpha_2ca^{-1}$ for some $\alpha_1,\alpha_2\in\K$. Let $X:=v_1^2+(-1)^{\iota}\alpha_1(1-\xi^2)^{-\frac{1}{2}}x_2ba^{-1}$ for short. Then \eqref{eqLH-4-1} can be rewritten by
\begin{align}
\Delta(X)=X \otimes 1+1\otimes X+(-1)^{\iota}(1-\xi^2)^{-\frac{1}{2}}x_2ba^{-1}\otimes \alpha_2ca^{-1}.\label{eqLH-4-3}
\end{align}
If the relation $v_1^2=0$ admits non-trivial deformations, then $v_1^2\in A_{[1]}$. Since $av_1^2=\xi^{2i}v_1^2a$, $bv_1^2=\xi^{2i}v_1^2b$, $dv_1^2=\xi^{2i}v_1^2d$ and $cv_1^2=\xi^{2i}v_1^2c$, a direct computation on $A_{[1]}$ shows that $v_1^2=0$ in $A$ and hence \eqref{eqLH-4-3} holds  only if $\alpha_1=0=\alpha_2$, which implies that $v_1v_2+(-1)^{\iota}v_2v_1=0$ in $A$.
\end{proof}

\begin{pro}\label{pro-Alg-123-24}
Let $A$ be a finite-dimensional Hopf algebra over $\cK_{24,1}$ such that $\gr A\cong\BN(V_{i,j,k,\iota})\sharp \cK_{24,1}$ for $(i,j,k,\iota)\in\Lambda^1\cup\Lambda^2\cup\Lambda^3-\Lambda^{1\ast}$. Then $A\cong\gr A$.
\end{pro}
\begin{proof}
Assume that $(i,j,k,\iota)\in\Lambda^1-\Lambda^{1\ast}$. Let $X=\xi^{2j}v_1^2v_2+(-1)^{\iota}\xi^{-2j} v_1v_2v_1+v_2v_1^2$, $Y=\frac{(-1)^{\iota}(1-\xi^{2j})\xi^4}{1+\xi^5}v_1^2v_2+\frac{(1-\xi^{4j})\xi^4}{1+\xi^5}v_1v_2v_1+v_2^3$ and $Z=v_1v_2^2+(-1)^{\iota}v_2v_1v_2+v_2^2v_1$ for simplicity. If $k=0$, then we have
\begin{gather}
\Delta(v_1^3)=v_1^3\otimes 1+a^{3j}\otimes v_1^3+(1-\xi^2)^{-\frac{1}{2}}x_2ba^{3j-1}\otimes X,\label{eqV1-1}\\
\Delta(X)=X\otimes 1+da^{3j-1}\otimes X,\quad \Delta(Y)=Y\otimes 1+da^{3j-1}\otimes Y,\label{eqV1-2}\\
\Delta(Z)=Z\otimes 1+a^{3j}\otimes Z-(\xi^{-j}+\xi^j)(1-\xi^2)^{-\frac{1}{2}}x_1ba^{3j-1}\otimes X+\\
\notag 2(-1)^{j+1}\xi^j(1-\xi^2)^{-\frac{1}{2}}x_2ba^{3j-1}\otimes Y.\label{eqV1-3}
\end{gather}

If $j\in\{2,4\}$, then $i-5=k=\iota=0$ and by \eqref{eqV1-2}, $X,Y\in\Pp_{1,da^{-1}}(A)=\Pp_{1,da^{-1}}(\cK_{24,1})=\K\{1-da^{-1},ca^{-1}\}$, that is, $X=\alpha_1(1-da^{-1})+\alpha_2ca^{-1}, Y=\beta_1(1-da^{-1})+\beta_2ca^{-1}$ for some $\alpha_1,\alpha_2,\beta_1,\beta_2\in\K$. Set $r:=v_1^3+\alpha_1(1-\xi^2)^{-\frac{1}{2}}x_2ba^{-1}$. Then
\begin{align}\label{eqV1-4}
\Delta(r)&=r\otimes 1+1\otimes r+(1-\xi^2)^{-\frac{1}{2}}x_2ba^{-1}\otimes\alpha_2ca^{-1}.
\end{align}
If the relation $v_1^3=0$ admits non-trivial deformations, then $v_1^3\in A_{[2]}$. Since $av_1^3=v_1^3a$, $bv_1^3=v_1^3b$, $cv_1^3=-v_1^3c$ and $dv_1^3=-v_1^3d$, a tedious computation on $A_{[2]}$ shows that $v_1^3=0$ must hold in $A$, which implies that \eqref{eqV1-4} holds only if $\alpha_1=0=\alpha_2$ and hence $X=0$ in $A$.
 Similarly, $Y=\beta_1(1-da^{-1})$ and $Z=-2\beta_1\xi^j(1-\xi^2)^{-\frac{1}{2}}x_2ba^{-1}$.
Since $aY=-Ya$ and $ab=\xi ba$, it follows that $\beta_1=0$. Consequently, $A\cong\gr A$.

If $j\in\{1,5\}$, then $i-2=k=\iota-1=0$ and by \eqref{eqV1-2}, $X,Y\in\Pp_{1,da^2}(A)=\Pp_{1,da^2}(\cK_{24,1})=\K\{1-da^2\}$, that is, $X=\alpha_1(1-da^2)$ and $Y=\alpha_2(1-da^2)$ for some $\alpha_1,\alpha_2\in\K$. Moreover, $v_1^3+\alpha_1(1-\xi^2)^{-\frac{1}{2}}x_2ba^2\in\Pp_{1,a^3}(A)=\Pp_{1,a^3}(\cK_{24,1})$, which implies that
$v_1^3+\alpha_1(1-\xi^2)^{-\frac{1}{2}}x_2ba^2=\alpha_3(1-a^3)$. Since $dv_1^3=v_1^3d$ and $cv_1^3=v_1^3c$, it follows that
$\alpha_1=0=\alpha_3$ and hence $X=0=v_1^3$  in $A$. Then a direct computation shows that $Z+2\alpha_2\xi^j(1-\xi^2)^{-\frac{1}{2}}x_2ba^2\in\Pp_{1,a^3}(A)=\Pp_{1,a^3}(\cK_{24,1})$, that is,
$Z+2\alpha_2\xi^j(1-\xi^2)^{-\frac{1}{2}}x_2ba^2=\alpha_4(1-a^3)$ for some $\alpha_4\in\K$.
Since $aZ=\xi^4Za$, $bZ=\xi^4Zb$, $cZ=\xi^4Zc$ and $dZ=\xi^4Zd$, it follows that $\alpha_2=0=\alpha_4$. Consequently, $\gr A\cong A$.

If $k=1$, then $(i,j,k,\iota)\in\{(2,j_1,1,1),(5,j_1,1,1)\mid j_1=2,4\}$ and we have that
\begin{gather*}
\Delta(v_1^3)=v_1^3\otimes 1+da^{3j-1}\otimes v_1^3+(1-\xi^2)^{-\frac{1}{2}}x_2ca^{3j-1}\otimes X,\\
\Delta(X)=X\otimes 1+a^{3j}\otimes X,\quad
\Delta(Y)=Y\otimes 1+a^{3j}\otimes Y,\\
\Delta(Z)=Z\otimes 1+da^{3j-1}\otimes Z+(\xi^{-j}+\xi^j)(1-\xi^2)^{-\frac{1}{2}}x_1ca^{3j-1}\otimes X
+\\2(-1)^{j+1}\xi^j(1-\xi^2)^{-\frac{1}{2}}x_2ca^{3j-1}\otimes Y.
\end{gather*}

It follows that $X=0=Y$ in $A$ and $v_1^3,Z\in\Pp_{1,da^{-1}}(A)=\Pp_{1,da^{-1}}(\cK_{24,1})$, that is,
$v_1^3=\alpha_1(1-da^{-1})+\alpha_2ca^{-1}$ and $Z=\beta_1(1-da^{-1})+\beta_2ca^{-1}$ for some $\alpha_1,\alpha_2,\beta_1,\beta_2\in\K$. Since $av_1^3=(-1)^{i}v_1^3a$, $bv_1^3=(-1)^{i}v_1^3b$, $cv_1^3=(-1)^{i}v_1^3c$ and $dv_1^3=(-1)^{i}v_1^3d$, it follows that $\alpha_1=0=\alpha_2$.
Since $aZ=(-1)^{i}\xi^4Za$, $bZ=(-1)^{i}\xi^4Zb$, $cZ=(-1)^{i}\xi^4Zc$ and $dZ=(-1)^{i}\xi^4Zd$, it follows that $\beta_1=0=\beta_2$. Consequently, $A\cong\gr A$.

The proof for $(i,j,k,\iota)\in\Lambda^2\cup\Lambda^3$ follows the same lines as for $(i,j,k,\iota)\in\Lambda^1$.
\end{proof}

\begin{pro}\label{pro-alge-56-24}
Let $A$ be a finite-dimensional Hopf algebra over $\cK_{24,1}$ such that $\gr A\cong\BN(V_{i,j,k,\iota})\sharp \cK_{24,1}$ for $(i,j,k,\iota)\in\Lambda^5\cup\Lambda^6$. Then $A\cong \gr A$.
\end{pro}
\begin{proof}
Assume that $(i,j,k,\iota)\in\Lambda^5$. Observe that $i\in\{0,3\}$. If the relation $v_1^2=0$ admits non-trivial deformations, then $v_1^2\in A_{[1]}$. Hence there exist some elements $\alpha_{p,q,r},\beta_{p,q,r},\gamma_{p,q,r},\lambda_{p,q,r}\in\K$ with $p+q,p,q\in\I_{0,1},r\in\I_{0,5}$ such that
\begin{align*}
v_1^2=\sum \alpha_{p,q,r}v_1^pv_2^qa^r+\beta_{p,q,r}v_1^pv_2^qda^r+\gamma_{p,q,r}v_1^pv_2^qba^r+\lambda_{p,q,r}v_1^pv_2^qca^r.
\end{align*}
Since $av_1^2=v_1^2a, bv_1^2=v_1^2b, cv_1^2=v_1^2c,dv_1^2=v_1^2d$, it follows that $\alpha_{p,q,r}=\beta_{p,q,r}=\gamma_{p,q,r}=\lambda_{p,q,r}=0$ with $p+q,p,q\in\I_{0,1},r\in\I_{0,5}$ and hence $v_1^2=0$ in $A$. Similarly, we have that $v_1v_2+\xi^{5j}v_2v_1=0$ since
\begin{align*}
a(v_1v_2+(-1)^k\xi^{5j}v_2v_1)&=\xi^{-1}(v_1v_2+(-1)^k\xi^{5j}v_2v_1)a, \\ b(v_1v_2+(-1)^k\xi^{5j}v_2v_1)&=\xi^{-1}(v_1v_2+(-1)^k\xi^{5j}v_2v_1)b, \\ c(v_1v_2+(-1)^k\xi^{5j}v_2v_1)&=-\xi^{-1}(v_1v_2+(-1)^k\xi^{5j}v_2v_1)c,\\ d(v_1v_2+(-1)^k\xi^{5j}v_2v_1)&=-\xi^{-1}(v_1v_2+(-1)^k\xi^{5j}v_2v_1)d.
\end{align*}
If $N=6$, then the relation $v_2^6=0$ must hold in $A$ since $\Delta(v_2^6)=v_2^6\otimes 1+1\otimes v_2^6$. If $N=3$, then the relation $v_2^3=0$ must hold in $A$ since $av_2^3=(-1)^{i+\iota}v_2^3a, bv_2^3=(-1)^{i+\iota}v_2^3b, cv_2^3=(-1)^{i}v_2^3c, dv_2^3=(-1)^{i}v_2^3d$ and $v_2^3\in\Pp_{1,da^{3j-1}}(A)$ or $\Pp_{1,a^3}(A)$. Consequently, $A\cong\gr A$.

The proof for $(i,j,k,\iota)\in\Lambda^6$ follows the same lines as for $(i,j,k,\iota)\in\Lambda^5$.
\end{proof}

Finally, we give the classification of finite-dimensional Hopf algebras over $\cK_{24,1}$ whose infinitesimal braidings are indecomposable objects in ${}_{\cK_{24,1}}^{\cK_{24,1}}\mathcal{YD}$.
\begin{thm}\label{thmFDHopfalgebra-24}
Let $A$ be a finite-dimensional Hopf algebra over $\cK_{24,1}$ whose infinitesimal braiding $V$ is indecomposable
 in ${}_{\cK_{24,1}}^{\cK_{24,1}}\mathcal{YD}$.   Then $A$ is isomorphic  to one of the following objects:
\begin{itemize}
  \item $\bigwedge\K_{\chi_{i,j,k}}\sharp \cK_{24,1}$ for $(i,j,k)\in\Lambda^0$;
  \item $\BN(V_{i,j,k,\iota})\sharp \cK_{24,1}$ for $(i,j,k,\iota)\in\cup_{i=1}^6\Lambda^i-\Lambda^{1\ast}$;
  \item $\mathcal{C}_{i,j,k,\iota}(\mu)$ for $\mu\in\K$ and $(i,j,k,\iota)\in\Lambda^{1\ast}$.
\end{itemize}
\end{thm}
\begin{proof}
 Since $A_{[0]}\cong \cK_{24,1}$,  $\gr A\cong R\sharp \cK_{24,1}$. By \cite[Theorem 1.3]{AA18}, $V$ is simple and $R\cong \BN(V)$. Then by Theorem \ref{thmFDNichols-24}, $V$ is isomorphic either to $\K_{\chi_{i,j,k}}$ for $(i,j,k)\in\Lambda^0$
or to $V_{i,j,k,\iota}$ for $(i,j,k,\iota)\in\cup_{i=1}^6\Lambda^i$.  The theorem follows by Propositions \ref{pro-Alg-1-24}--\ref{pro-alge-56-24}. The Hopf algebras from different families are pairwise non-isomorphic since the diagrams are not isomorphic as Yetter-Drinfeld modules over $\cK_{24,1}$.
\end{proof}
\begin{rmk}
\begin{itemize}
\item    $\bigwedge\K_{\chi_{i,j,k}}\sharp \cK_{24,1}$ with $(i,j,k)\in\Lambda^0$ are basic Hopf algebras of dimension $48$.
\item For $(i,j,k,\iota)\in \Lambda^4$, $\Lambda^5$~and~$\Lambda^6$,
$\BN(V_{i,j,k,\iota})\sharp \cK_{24,1}$ are basic Hopf algebras of dimension $96$, $144$~and~$288$, respectively.
\item For $(i,j,k,\iota)\in\Lambda^1$~and~$\Lambda^2\cup\Lambda^3$, $\BN(V_{i,j,k,\iota})\sharp \cK_{24,1}$ are basic Hopf algebras of dimension~$432$~and~$864$, respectively.
\item $\mathcal{C}_{i,j,k,\iota}(\mu)$ with $\mu\neq 0$ are non-trivial liftings of $\BN(V_{i,j,k,\iota})\sharp \cK_{24,1}$ for $(i,j,k,\iota)\in\Lambda^{1\ast}\subset\Lambda^1$. They constitute new examples of Hopf algebras without the dual Chevally property.
\end{itemize}
\end{rmk}

\vskip10pt \centerline{\bf ACKNOWLEDGMENT}

\vskip10pt

The main part of the paper was written during the visit of the author to University of Padova supported by China Scholarship Council. The author would like to thank his supervisors Profs. Naihong Hu and Giovanna Carnovale for their kind help and continued encouragement. The author is grateful to Profs. N. Andruskiewitsch and G. A. Garcia for helpful comments and guiding discussions on this topic. The author would like to thank  the referee for careful reading and helpful suggestions that largely improved the exposition. The author thanks the referee for careful reading and helpful comments.

\end{document}